\documentclass[12pt,reqno]{amsart}

\paperheight=258mm
\paperwidth=252mm
\textheight=230mm  \topmargin=-21mm
\textwidth=152mm
\oddsidemargin=5mm 
\evensidemargin=5mm
\usepackage{amsmath}
\usepackage{amssymb}
\usepackage{amsbsy}
\usepackage{amscd}
\usepackage{amsfonts}
\usepackage{amsopn}
\usepackage{amstext}
\usepackage{amsthm}
\usepackage{amsxtra}
\usepackage{color}
\usepackage{enumerate}
\everymath={\displaystyle}

\newcommand{\beq}{\begin{equation}}
\newcommand{\eeq}{\end{equation}}

\newcommand{\bee}{\begin{eqnarray}}
\newcommand{\ene}{\end{eqnarray}}
\newcommand{\bea}{\begin{eqnarray*}}
\newcommand{\ena}{\end{eqnarray*}}

\theoremstyle{plain}
\newtheorem{theo}{Theorem}[section]
\newtheorem{lemma}[theo]{Lemma}
\newtheorem{coro}[theo]{Corollary}
\newtheorem{prop}[theo]{Proposition}

\newtheorem*{rem}{Remark}

\newtheorem*{Data Availability}{Data Availability}

\newtheorem*{hypothesis}{Hypothesis S}
\newtheorem{hypothesis1}[theo]{Conjecture }

\theoremstyle{definition}

\numberwithin{equation}{section}

\bibliographystyle{plain}

\begin{document}

\title[]{\textbf {C\MakeLowercase{ancellation in sums over special sequences on} }$\mathbf{{\rm{GL}}_{m}}$ \textbf{\MakeLowercase{and their applications}}}

\author{Qiang Ma}
\address{Institute for Advanced Study in Mathematics, Zhejiang University, Hangzhou, Zhejiang 310000, People's Republic of China}
\email{qma829@zju.edu.cn}

\author{Rui Zhang}
\address{School of Mathematics, Shandong University, Jinan, Shandong 250100, People's Republic of China}
\email{rzhang@mail.sdu.edu.cn}

\keywords{Exponential estimates, Automorphic representations, Waring--Goldbach problem}
\subjclass[2020]{11L07, 11F66, 11P55}
  \begin{abstract}
 Let $a(n)$ be the $n$-th Dirichlet coefficient
 of the automorphic $L$-function or the Rankin--Selberg $L$-function.   We investigate the cancellation  of  $a(n)$ over sequences linked to the Waring--Goldbach problem, by establishing a nontrivial bound  for the additive twisted sums over primes on ${\mathrm{GL}}_m .$ The bound does not depend on the generalized Ramanujan conjecture or the nonexistence of Landau--Siegel zeros. Furthermore, we present an application associated with the Sato--Tate conjecture and propose a conjecture about  the Goldbach conjecture on average bound. 
\end{abstract}
\maketitle
\pagestyle{empty}
 \section{introduction}
In the field of number theory, the additive and multiplicative structures of integers are two major areas of research.  Multiplicative number theory investigates the distribution of prime numbers through the analysis of generating functions, with the Prime Number Theorem and Dirichlet's theorem being the most famous classical achievements in this field. On the other hand, additive number theory explores the additive decomposition of integers, with the Waring--Goldbach problem being a well-known subject. It has been generally assumed that these two structures are independent, but no proof has confirmed this. Problems involving both structures tend to be more challenging to solve, such as the twin prime conjecture,  which asserts that there should be infinitely many pairs of primes which differ by $2$. Although this conjecture remains open, recent work by Zhang \cite{Zhang-2014},  Maynard \cite{May-2015}, Polymath \cite{Polymath-2014}, and many other scholars has made significant progress.  Merging both additive and multiplicative aspects within number theory, this paper specifically explores coefficients of the standard $L$-functions over sequences related to the Waring--Goldbach problem.

Let \(\mathcal{P}\) be the set of all primes, \(N\) be a sufficiently large positive integer.
  Define the sets
$$
\mathcal{P}_{k}(N):=\{p\in\mathcal{P}, p^k\le N\},\quad 
$$
and
\begin{align*}
\mathcal{J}_{k,u}(N):= &\{(p_1, p_2,\dots,p_u)|\ p_i\in \mathcal{P}_{k}(N) ~ (1\le i\le u ), N=p_1^k+p_2^k+\cdots+p_u^k\},
\end{align*}
where $k$ and $u$ are positive integers.   For a fixed \(k\), the objectives of the Waring--Goldbach problem are to determine the smallest \(u\) such that the set \(\mathcal{J}_{k,u}(N)\) is non-empty and to provide the asymptotic formula for the number of elements in \(\mathcal{J}_{k,u}(N)\). 
We introduce two special cases associated with \(\mathcal{J}_{k,u}(N)\): the linear case \(k=1\) and the quadratic case \(k=2\). In both cases, $N$ is assumed to be an odd integer throughout this paper.

For $k=1$, Vinogradov \cite{Vi37} proved  
 $$
\# \mathcal{J}_{1,3}(N)=\frac12(1+o(1))\mathfrak{G}_{1,3}(N)\frac{N^2}{(\log N)^{3}},
$$
where $\mathfrak{G}_{1,3}(N)$ is the singular series
\begin{equation}\label{singular series}
\mathfrak{G}_{1,3}(N)=\prod_{p\mid N}\left(1-\frac{1}{(p-1)^{2}}\right)\prod_{p\nmid N}\left(1+\frac{1}{(p-1)^{3}}\right).
 \end{equation}
The equation implies that every sufficiently large odd integer \(N\) can be expressed as the sum of three primes, representing Vinogradov's three primes theorem. Furthermore, \(u=3\) is the minimum number of variables required for \(\mathcal{J}_{1,u}(N)\) to be non-empty.  For the quadratic case $k=2$, and for all sufficiently large $N$ satisfying $N\equiv5\ ({\rm mod}\ 24)$,  Hua \cite{Hua-1938} gave the best result and showed that $\#\mathcal{J}_{2,5}(N)>0,$ where the number of variables required is still the best (for more details, see Lemma \ref{4}).

Inspired by the above result, many scholars have considered the analogue of Vinogradov's three primes theorem for the coefficients of the standard $L$-functions. Let $c$ be a positive number, whose value may vary throughout the paper.  Fouvry and Ganguly \cite{F-G-2014} proved that  there exists an effective constant $c>0$ such that
\begin{equation}\label{V-1}
\sum_{(p_1,p_2,p_3)\in\mathcal{J}_{1,3}(N)}\nu_f(p_1)\ll_f N^{2} \exp \left(-c\sqrt{\log N}\right),
\end{equation}
where $\nu_f (n)$ is the normalized Fourier coefficient of a primitive holomorphic or Maass cusp form $f$.
Let  $F$ be a Hecke--Maass form for $\mathrm{SL}(3, \mathbb{Z})$, with $A_F(n, 1)$ denoting the $n$th coefficient of its Dirichlet series.  
 Hou \emph{et al.} \cite{Hou-16} proved that there exists an effective constant $c>0$ such that
\begin{equation}\label{V-2}
\sum_{(p_1,p_2,p_3)\in\mathcal{J}_{1,3}(N)}A_F(p_1, 1)  \ll_F N^{2} \exp (-c \sqrt{\log N}).
\end{equation}
 For a primitive Dirichlet character $\chi$,   Hoffstein and Ramakrishnan \cite[Theorem A]{H-R-1995} and Banks \cite{B-1997} have proved  that  the twisted $L$-functions $L(s, f\otimes\chi)$ and $L(s, F \otimes \chi)$ don't exist Landau--Siegel  zeros.  The above two estimates depend on the nonexistence of Landau--Siegel  zeros. 
Since  the existence of Landau--Siegel zeros is still open for the $L$-functions with higher ranks, the method used in these results cannot be applied to similar problems for higher ranks. In this paper, we introduce a new approach to obtain related estimates for $L$-functions and Rankin--Selberg $L$-functions with higher ranks, similar to the  estimates \eqref{V-1} and \eqref{V-2}.
Moreover, our method can be applied to consider the cancellation of the coefficients associated with the quadratic case, which is more intricate. Inspired by the approach for handling the Waring--Goldbach problem,  we employ several different estimates of exponential sums to obtain the optimal upper bound (for more details, see Section \ref{pr-th-1.4}).  

Let $\mathbb{A}$ be  the ring of adeles over \(\mathbb{Q}\). Define a unitary Hecke character \(\omega: \mathbb{Q}^{\times}\backslash \mathbb{A}^{\times} \rightarrow \mathbb{C}^{\times}\). Consider the set
\[
\mathfrak{E}_m:=\{\pi \subset L^{2}(\mathrm{GL}_{m}(\mathbb{Q}) \backslash  \mathrm{GL}_{m}(\mathbb{A}); \omega) \mid \omega \text{ is trivial, }\pi \text{ is irreducible and cuspidal} \}.
\]
 Given \(\pi \in \mathfrak{E}_m\), let \(\mathfrak{q}_\pi\) be the conductor of \(\pi\), \(L(s, \pi)\) be the associated standard \(L\)-function, and \(\widetilde{\pi} \in \mathfrak{E}_m\) be the contragredient representation. We write $\lambda_\pi(n)$ to be the $n$-th Dirichlet coefficient of $L(s, \pi)$.  Let $\chi $ be a Hecke character of the idele class group $\mathbb{Q}^{\times}\backslash \mathbb{A}^{\times} $. We define the set 
 $$\begin{aligned}
 \mathfrak{E}_m^b:=\{\pi \in \mathfrak{E}_m&|\pi=\widetilde{\pi} \text{ and } \pi \neq \pi \otimes \chi \\& \text{ for  all nontrivial primitive quadratic Dirichlet characters }\chi\}.
 \end{aligned}$$ 
We will examine the cancellation  of  the multiplicative  Dirichlet coefficients $\lambda_{\pi}(n)$ 
over specific sequences $\mathcal{J}_{1,3}(N)$ and $\mathcal{J}_{2,5}(N)$, and provide some applications. 
\begin{theo}\label{<2>}
Let $\pi\in\mathfrak{E}_m^b$ and $B>0$ be a sufficiently large constant. Then 
$$
r_\pi(N):=\sum_{(p_1,p_2,p_3)\in\mathcal{J}_{1,3}(N)} \lambda_{\pi}\left(p_1\right) \ll_{\pi, B} N^{2}(\log N)^{-B},
$$
and
$$
w_\pi(N):=\sum_{(p_1,p_2,p_3,p_4,p_5)\in\mathcal{J}_{2,5}(N)} \lambda_{\pi}\left(p_1\right) \ll_{\pi, B} N^{3/2}(\log N)^{-B}.$$
\end{theo}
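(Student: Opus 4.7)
The plan is to handle both bounds in parallel via the Hardy--Littlewood circle method, the decisive input being the uniform minor-arc bound on the additive twisted sum
$$S_\pi^{(k)}(\alpha):=\sum_{p\le N^{1/k}}\lambda_{\pi}(p)e(\alpha p^k),\qquad k\in\{1,2\},$$
whose construction is the central technical novelty advertised in the abstract. For the linear case, setting $T(\alpha):=\sum_{p\le N}e(\alpha p)$ one has
$$r_\pi(N)=\int_0^1 S_\pi^{(1)}(\alpha)\,T(\alpha)^2\,e(-\alpha N)\,d\alpha,$$
and I would partition the unit interval into major arcs $\mathfrak M$ (Farey neighbourhoods of $a/q$ with $q\le Q$ and $|\alpha-a/q|\le Q/N$, for $Q=(\log N)^C$ with $C=C(B)$ large) and minor arcs $\mathfrak m=[0,1]\setminus\mathfrak M$.

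On the minor arcs the bound is almost immediate: combining the uniform estimate $\sup_{\alpha\in\mathfrak m}|S_\pi^{(1)}(\alpha)|\ll N(\log N)^{-B-1}$ supplied by the paper with Parseval's identity $\int_0^1|T(\alpha)|^2\,d\alpha\ll N/\log N$ yields
$$\int_{\mathfrak m}\envert{S_\pi^{(1)}(\alpha)T(\alpha)^2}\,d\alpha\le\sup_{\mathfrak m}\envert{S_\pi^{(1)}}\int_0^1\envert{T}^2\,d\alpha\ll N^{2}(\log N)^{-B-2}.$$
On the major arcs the point is that $S_\pi^{(1)}$ is already small: the hypothesis $\pi\in\mathfrak E_m^b$ is precisely what eliminates potential Landau--Siegel zeros of $L(s,\pi\otimes\chi)$ for every quadratic Dirichlet character $\chi$ of modulus $\le Q$, so a Siegel--Walfisz-type theorem for $\lambda_\pi$ in arithmetic progressions gives
$$\sum_{\substack{p\le x\\ p\equiv a\,(q)}}\lambda_{\pi}(p)\ll x\exp(-c\sqrt{\log x})$$
uniformly in $q\le Q$. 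Expanding $S_\pi^{(1)}(a/q+\beta)$ accordingly, together with the trivial $|T|\ll N/\log N$ and $|\mathfrak M|\ll Q^3/N$, the major-arc contribution is comfortably absorbed into $N^2(\log N)^{-B}$.

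The quadratic case proceeds in exactly the same architecture, starting from
$$w_\pi(N)=\int_0^1 S_\pi^{(2)}(\alpha)\,T_2(\alpha)^4\,e(-\alpha N)\,d\alpha,\qquad T_2(\alpha):=\sum_{p\le\sqrt N}e(\alpha p^2),$$
except that the minor-arc $L^2$ bound is now too weak and must be replaced by a Hua-type fourth-moment estimate $\int_0^1|T_2(\alpha)|^4\,d\alpha\ll N(\log N)^{O(1)}$; together with the analogous sup-bound $\sup_{\mathfrak m}|S_\pi^{(2)}|\ll\sqrt N(\log N)^{-B-O(1)}$ this produces the desired $N^{3/2}(\log N)^{-B}$. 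The major-arc analysis is structurally identical, reducing once more to the PNT for $L(s,\pi\otimes\chi)$ along the quadratic residue class $p^2\equiv a\pmod q$, with the non-trivial Hua congruence conditions modulo small primes contributing only to the singular-series factor, which is harmless here because $S_\pi^{(2)}$ is itself small on $\mathfrak M$. The main obstacle is therefore not the circle-method assembly above, which is routine once the ingredients are in place, but rather the uniform minor-arc bound on $S_\pi^{(k)}(\alpha)$ itself: producing such an estimate on $\mathrm{GL}_m$ for arbitrary $m$, without invoking the generalised Ramanujan conjecture or the non-existence of Landau--Siegel zeros, is where all the real work lies, presumably through a Vaughan-style decomposition of $\lambda_\pi(p)$, bilinear-sum estimates, and the Rankin--Selberg prime number theorem supplying the necessary $|\lambda_\pi(n)|^2$ mean values.
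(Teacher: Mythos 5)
Your major-arc strategy is essentially the paper's: restricting to $\pi\in\mathfrak{E}_m^b$ kills the quadratic exceptional character in Lemma~\ref{th6.1}, which via a Siegel--Walfisz-type argument for $a_{\pi\otimes\chi}$ gives a pointwise bound on $T_{k,1}(N,\alpha)=\sum_{p\le N^{1/k}}\lambda_\pi(p)e(p^k\alpha)$ when $q\le(\log N)^{A}$; this is exactly Proposition~\ref{712}. The gap is in your treatment of the minor arcs. You invoke a uniform bound $\sup_{\alpha\in\mathfrak m}\lvert S_\pi^{(k)}(\alpha)\rvert\ll N^{1/k}(\log N)^{-B-1}$ and describe it as ``supplied by the paper,'' but no such bound exists in the paper, and indeed none is available: Proposition~\ref{712} only applies for $q\le(\log x)^A$ (major-arc range), and a minor-arc pointwise bound on the $\lambda_\pi$-twisted sum over primes would, as you yourself observe at the end, require a Vaughan-type bilinear decomposition for $\lambda_\pi(p)$ on $\mathrm{GL}_m$. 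That is precisely the tool that is \emph{not} available for general $\pi$ without GRC, and the whole point of the paper is to avoid needing it.

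The device you are missing is a swap of roles between the twisted and untwisted sums across the two arc families. On $\mathfrak m$, the paper pulls out the \emph{untwisted} prime exponential sum $H_{k,2}(N,\alpha)=\sum_{p\le N^{1/k}}e(p^k\alpha)$ via $\max_{\alpha\in\mathfrak m}\lvert H_{k,2}(N,\alpha)\rvert$ --- this is where the classical Vinogradov/Harman/Ren estimates (Lemmas~\ref{Vino-Harman1}--\ref{Ren}) apply --- and controls $T_{k,1}$ only in mean square, $\int_0^1\lvert T_{k,1}\rvert^2\,d\alpha\ll N^{1/k}m^2(\log N\mathfrak{q}_\pi)^2$, which follows from Rankin--Selberg and needs no exponential-sum technology for $\lambda_\pi$ at all; the remaining $H$'s are again controlled in $L^2$ (or, for $k=2$, by Blomer's fourth-moment bound). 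On $\mathfrak M$ it is reversed: $T_{k,1}$ is pulled out pointwise (Proposition~\ref{712}) and the $H$'s go into $L^2$. Your architecture bounds the hard object ($S_\pi^{(k)}$) pointwise on $\mathfrak m$ where no such bound exists, so the argument cannot close as written.
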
 
Due to the potential existence of these zeros, the  bounds in the above theorem are weaker than those in \eqref{V-1} and \eqref{V-2}, and the condition $\pi\in\mathfrak{E}_{m}^{b}$ is required to obtain Theorem \ref{<2>}.  Assuming an enhanced zero-free region for $\pi \in \mathfrak{E}_{m}$,  sharper upper bounds can be derived for a broader class of $L$-functions.
\begin{hypothesis}\label{hy}
Let $1/2\le\delta<1$.  For  any $\pi \in   \mathfrak{E}_m$ with $m\ge 1$, the  $L$-functions $L(s,\pi )$  have no zeros for $\mathrm{Re}(s)>\delta$. 
\end{hypothesis}
\begin{theo}\label{<3>}
 Let $\pi\in\mathfrak{E}_m$ and $\varepsilon$ be a sufficiently small positive number.  Under Hypothesis S,  
$$
r_\pi(N) \ll_{\varepsilon} m^{2} N^{\frac{5}{3}+\frac{\delta}{3}+\varepsilon}(\log (N \mathfrak{q}_{\pi}))^3,
$$
and
$$
    w_\pi(N) \ll_{\varepsilon} m^{2} (N^{\frac{23}{16}+\varepsilon}+N^{\frac{4}{3}+\frac{\delta}{6}+\varepsilon})(\log( N\mathfrak{q}_{\pi}))^{3}.
    $$ 
    \end{theo}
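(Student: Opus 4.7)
The plan is to apply the Hardy--Littlewood circle method and reduce both estimates to uniform sup-norm bounds on the twisted prime exponential sums. Set
\[
S_\pi(\alpha):=\sum_{p\le N}\lambda_\pi(p)(\log p)e(p\alpha),\qquad S(\alpha):=\sum_{p\le N}(\log p)e(p\alpha),
\]
\[
T_\pi(\alpha):=\sum_{p\le\sqrt{N}}\lambda_\pi(p)(\log p)e(p^2\alpha),\qquad T(\alpha):=\sum_{p\le\sqrt{N}}(\log p)e(p^2\alpha).
\]
Orthogonality of additive characters, together with partial summation to strip the $\log p$ weights, gives
\[
r_\pi(N)\ll(\log N)^{3}\left|\int_0^1 S_\pi(\alpha)S(\alpha)^2 e(-N\alpha)\,d\alpha\right|,
\]
\[
w_\pi(N)\ll(\log N)^{5}\left|\int_0^1 T_\pi(\alpha)T(\alpha)^4 e(-N\alpha)\,d\alpha\right|.
\]

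Next I would dominate each integral by pulling out the sup of the $\pi$-twisted factor and evaluating the remaining untwisted moment. Parseval yields
\[
\int_0^1|S(\alpha)|^2\,d\alpha=\sum_{p\le N}(\log p)^2\ll N\log N,
\]
while $\int_0^1|T(\alpha)|^4\,d\alpha$ counts quadruples of primes $p_i\le\sqrt{N}$ with $p_1^2+p_2^2=p_3^2+p_4^2$, weighted by $\prod\log p_i$; using the classical estimate $\sum_{n\le N}r_2(n)^2\ll N\log N$, this is $O(N(\log N)^{O(1)})$. Therefore
\[
r_\pi(N)\ll N(\log N)^{O(1)}\sup_\alpha|S_\pi(\alpha)|,\qquad w_\pi(N)\ll N(\log N)^{O(1)}\sup_\alpha|T_\pi(\alpha)|.
\]

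Now I would invoke the key sup-norm estimates, which constitute the paper's central exponential-sum input alluded to in the introduction. Under Hypothesis S one expects
\[
\sup_\alpha|S_\pi(\alpha)|\ll_\varepsilon m^2 N^{(2+\delta)/3+\varepsilon}(\log N\mathfrak{q}_\pi)^{O(1)},
\]
\[
\sup_\alpha|T_\pi(\alpha)|\ll_\varepsilon m^2\bigl(N^{7/16+\varepsilon}+N^{1/3+\delta/6+\varepsilon}\bigr)(\log N\mathfrak{q}_\pi)^{O(1)},
\]
where the $N^{7/16+\varepsilon}$ term arises from a Weyl-type inequality for quadratic prime exponential sums (independent of Hypothesis S), and the $N^{1/3+\delta/6+\varepsilon}$ term from combining Vaughan's identity with the zero-free region provided by Hypothesis S. Substituting these bounds yields the exponent $5/3+\delta/3$ in the estimate for $r_\pi(N)$ and the sum $N^{23/16+\varepsilon}+N^{4/3+\delta/6+\varepsilon}$ in the estimate for $w_\pi(N)$, exactly as claimed.

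The main obstacle is establishing the sup-norm bounds themselves. My approach would be: (i) decompose $\Lambda(n)\lambda_\pi(n)$ via Vaughan's (or Heath--Brown's) identity into type I and type II bilinear sums; (ii) for type I, complete the inner sum by additive-to-multiplicative character orthogonality and evaluate through contour integration of $L(s,\pi\otimes\chi)$, shifting past $\mathrm{Re}(s)=\delta$ where Hypothesis S guarantees analyticity; (iii) for type II, apply Cauchy--Schwarz and bound the diagonal using the Rankin--Selberg second moment $\sum_{n\le x}|\lambda_\pi(n)|^2\ll x(\log x)^{O(1)}$; and (iv) in the quadratic case, perform a Weyl differencing on the phase $p^2\alpha$ to linearize before running the same machinery. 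Optimizing the type I/II cutoff produces the desired exponents $(2+\delta)/3$ and $1/3+\delta/6$, and the $m^2$ dependence enters through the analytic conductor of $\pi\otimes\chi$ in the contour shift.
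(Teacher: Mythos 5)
Your circle-method reduction and the arithmetic in the final substitution are fine, and if the two sup-norm bounds
\[
\sup_{\alpha\in\mathbb{T}}|S_\pi(\alpha)|\ll_\varepsilon m^2 N^{(2+\delta)/3+\varepsilon}(\log N\mathfrak{q}_\pi)^{O(1)},\qquad
\sup_{\alpha\in\mathbb{T}}|T_\pi(\alpha)|\ll_\varepsilon m^2\bigl(N^{7/16+\varepsilon}+N^{1/3+\delta/6+\varepsilon}\bigr)(\log N\mathfrak{q}_\pi)^{O(1)}
\]
were available, the claimed exponents would indeed follow. But those bounds are exactly where the proposal has a real gap, and the paper's argument is structured precisely to avoid needing them.

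The paper's Propositions \ref{712}--\ref{713} control the twisted prime sum only when $\alpha$ is well approximated by a rational with \emph{small} denominator $q$: the bound scales like $(1+x^k/(qQ))q^{1/2}x^{\delta+\varepsilon}$, which degenerates once $q$ is of intermediate size, i.e.\ on the minor arcs. Hypothesis~S supplies a zero-free region for $L(s,\pi\otimes\chi)$ and hence, via contour integration, a good bound for $\sum_n\Lambda(n)a_{\pi\otimes\chi}(n)$ for each \emph{fixed} character; it gives nothing directly about $\alpha$ far from rationals with small denominators. To get a nontrivial sup-norm bound for $\sum_p\lambda_\pi(p)(\log p)e(p^k\alpha)$ on minor arcs you would have to push a Vaughan/Heath--Brown decomposition through the coefficients $\Lambda(n)\lambda_\pi(n)$, and this is genuinely delicate on $\mathrm{GL}_m$ with $m\ge 3$: without GRC the individual $|\lambda_\pi(n)|$ are not bounded, the Type~II bilinear pieces no longer close under Cauchy--Schwarz in the usual way (only Rankin--Selberg second-moment information is available), and the Type~I pieces involve partial Dirichlet coefficients rather than $L(s,\pi\otimes\chi)$ itself. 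This difficulty is exactly what the introduction flags as the obstruction that forced previous $\mathrm{GL}_2$ and $\mathrm{GL}_3$ work to rely on the non-existence of Landau--Siegel zeros, and what the present paper is designed to sidestep.

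The paper therefore treats the two arcs asymmetrically rather than extracting a global sup of the twisted factor. On the major arcs ($q\le P$), it takes the pointwise bound from Proposition \ref{713} for $T_{k,1}$ and pairs it with $L^2$-moments of the \emph{untwisted} prime exponential sums $H_{k,u'}$. On the minor arcs it reverses the roles: it bounds the twisted factor only in $L^2$ (equation \eqref{maj12}, which follows from orthogonality plus Rankin--Selberg), and uses sup-norm bounds on the minor arcs for the \emph{untwisted} $H_{k,u'}$, namely Lemmas \ref{Vino-Harman1}, \ref{Harman2}, \ref{Ren}, which are classical Vinogradov/Harman/Ren estimates requiring no information about $\pi$. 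Balancing the major-arc term $\asymp m^2PN^{1+\delta+\varepsilon}$ against the minor-arc term $\asymp mN^2P^{-1/2}$ then gives $P=N^{2(1-\delta)/3}$ and the exponent $5/3+\delta/3$; the quadratic case is handled the same way, splitting the minor arcs once more via Dirichlet approximation to obtain the two terms $N^{23/16}$ and $N^{4/3+\delta/6}$. So your $N^{7/16}$ threshold really comes from Harman's bound for the untwisted quadratic Weyl sum $H_{2,2}$ on a subregion of the minor arcs, not from the twisted sum $T_\pi$.

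To repair your proposal you should not attempt the global sup-norm bound for $S_\pi$ or $T_\pi$: instead, keep the sup of the twisted factor only over the major arcs (where Proposition \ref{713} applies), and on the minor arcs use $\int_0^1|T_{k,1}|^2\,d\alpha\ll N^{1/k}m^2(\log N\mathfrak{q}_\pi)^2$ together with the classical minor-arc estimates for the untwisted $H_{k,u'}$.
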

\begin{rem}
\hspace*{\fill}
\begin{enumerate} 
\item 
Regarding the upper bounds in \eqref{V-1} and \eqref{V-2},  we can use the same method as in Theorem \ref{<3>} to obtain them. 
\item
 Note that the upper bound for $w_\pi(N)$  consists of two terms. This is due to the use of Dirichlet approximation theorem, which allows us to divide the minor arcs twice.  Such an approach is effective in handling minor arcs within the circle method.
\end{enumerate}
\end{rem}
 
In our above cases, we do not provide asymptotic formulas. But  the upper bounds we obtain are better than the error terms in the classical cases (see Lemma \ref{4}). Furthermore, we utilize the relationship between the Rankin--Selberg $L$-function and the adjoint $L$-function to obtain  asymptotic formulas concerning  the Rankin--Selberg coefficients over $\mathcal{J}_{k,u}(N)$. For $\pi\in\mathfrak{E}_m$, the construction of an adjoint $L$-function is described by employing a Rankin--Selberg $L$-function and zeta function:
\begin{equation}\label{jijiji1}
L(s, \operatorname{Ad}\pi):=\frac{L(s, \pi \times \tilde{\pi})}{\zeta(s)},
\end{equation}
where $\operatorname{Ad}\pi$ is the adjoint lift of $\pi$.
The adjoint $L$-function is meromorphic since $L(s, \pi \times \widetilde{\pi})$ and $\zeta(s)$ are meromorphic. Details regarding the Rankin--Selberg $L$-functions can be found in Section 2. For $\mathrm{Re}(s)>1$, we express
$$
L(s,  \operatorname{Ad}\pi)=\prod_p \underset{ (i, j) \neq (1,1)}{\prod_{i=1}^m\prod_{j=1}^{m}}\left(1-\frac{\alpha_{i, \pi}(p)\overline{\alpha_{j, \pi}(p)}}{p^s}\right)^{-1}=\sum_{n=1}^{\infty} \frac{\lambda_{\operatorname{Ad}\pi}(n)}{n^{s}}.
$$
By the relation between $\lambda_{\pi \times \widetilde{\pi}}(p)$ and $\lambda_{\operatorname{Ad}\pi}(p)$, we easily obtain the following results.
\begin{coro}\label{twist}
 Let $\pi\in\mathfrak{E}_m^b$,  $\varepsilon$ be a  sufficiently small positive number and $B>0$ be a sufficiently large constant.   \
\begin{enumerate}
\item Under the hypothesis that $\operatorname{Ad}\pi\in\mathfrak{E}_{m^{2}-1}^b$, 
 $$
r_{\pi\times\tilde{\pi}}(N):=\sum_{(p_1,p_2,p_3)\in\mathcal{J}_{1,3}(N)} \lambda_{\pi\times\tilde{\pi}}\left(p_1\right) =\# \mathcal{J}_{1,3}(N)+O_{\pi,B}(N^{2}(\log N)^{-B}),$$
and
$$
w_{\pi\times\tilde{\pi}}(N):=\sum_{(p_1,p_2,p_3,p_4,p_5)\in\mathcal{J}_{2,5}(N)} \lambda_{\pi\times\tilde{\pi}}\left(p_1\right) =\# \mathcal{J}_{2,5}(N)+O_{\pi,B}(N^{2}(\log N)^{-B}).
$$
\item Under Hypothesis S and  the hypothesis that $\operatorname{Ad}\pi\in\mathfrak{E}_{m^{2}-1}^b$, 
$$
        r_{\pi\times\tilde{\pi}}(N) =\# \mathcal{J}_{1,3}(N)+O_{\varepsilon}(m^{2}N^{\frac{5}{3}+\frac{\delta}{3}+\varepsilon}(\log ( N\mathfrak{q}_{\pi}))^3),$$
and
$$
w_{\pi\times\tilde{\pi}}(N) =\# \mathcal{J}_{2,5}(N)+O_{\varepsilon}(m^{2}(N^{\frac{23}{16}+\varepsilon}+N^{\frac{4}{3}+\frac{\delta}{6}+\varepsilon })(\log ( N\mathfrak{q}_{\pi}))^3).$$
 \end{enumerate}
\end{coro}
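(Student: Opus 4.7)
The plan is to reduce both asymptotic formulas to Theorem \ref{<2>} and Theorem \ref{<3>} applied to the adjoint representation $\operatorname{Ad}\pi$, using the factorization \eqref{jijiji1}. Comparing Dirichlet series on both sides of $L(s,\pi\times\widetilde{\pi})=\zeta(s)L(s,\operatorname{Ad}\pi)$ yields the convolution identity $\lambda_{\pi\times\widetilde{\pi}}(n)=\sum_{d\mid n}\lambda_{\operatorname{Ad}\pi}(d)$, which at a prime $p$ collapses to
$$
\lambda_{\pi\times\widetilde{\pi}}(p) = 1 + \lambda_{\operatorname{Ad}\pi}(p).
$$

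First I would substitute this identity into the sums $r_{\pi\times\widetilde{\pi}}(N)$ and $w_{\pi\times\widetilde{\pi}}(N)$. Each sum splits into an exact main term $\#\mathcal{J}_{1,3}(N)$ or $\#\mathcal{J}_{2,5}(N)$ coming from the constant contribution $1$, plus a remainder term equal to $r_{\operatorname{Ad}\pi}(N)$ or $w_{\operatorname{Ad}\pi}(N)$ respectively. The corollary is thereby reduced to bounding these two remainders.

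Next I would invoke the hypothesis $\operatorname{Ad}\pi\in\mathfrak{E}_{m^2-1}^b$ to apply the appropriate input theorem to this representation. For part (1), Theorem \ref{<2>} gives $r_{\operatorname{Ad}\pi}(N)\ll_{\pi,B} N^{2}(\log N)^{-B}$ and $w_{\operatorname{Ad}\pi}(N)\ll_{\pi,B} N^{3/2}(\log N)^{-B}$, from which the two stated asymptotic formulas follow immediately (the $w$-bound actually being stronger than stated). For part (2), Theorem \ref{<3>} applies under Hypothesis S with the rank parameter $m$ replaced by $m^2-1$ and the conductor $\mathfrak{q}_\pi$ replaced by $\mathfrak{q}_{\operatorname{Ad}\pi}$; since $(m^2-1)^{2}\ll m^{4}$ and the conductor dependence $\log\mathfrak{q}_{\operatorname{Ad}\pi}\ll m\log\mathfrak{q}_\pi$ can be absorbed into the logarithmic factor $(\log(N\mathfrak{q}_\pi))^{3}$, the two error terms $m^{2}N^{5/3+\delta/3+\varepsilon}(\log(N\mathfrak{q}_\pi))^{3}$ and $m^{2}(N^{23/16+\varepsilon}+N^{4/3+\delta/6+\varepsilon})(\log(N\mathfrak{q}_\pi))^{3}$ come out as stated up to absolute constants.

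The argument has essentially no obstacle of its own: the self-duality of $\pi$ together with the non-twist condition built into $\mathfrak{E}_m^b$ is exactly what permits $\operatorname{Ad}\pi$ to be a cuspidal self-dual representation on $\mathrm{GL}_{m^{2}-1}$, and the explicit hypothesis $\operatorname{Ad}\pi\in\mathfrak{E}_{m^2-1}^b$ in the statement supplies precisely the representation-theoretic input required by Theorems \ref{<2>} and \ref{<3>}. The only remaining step is bookkeeping of rank and conductor dependencies, which is routine.
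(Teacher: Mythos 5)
Your proposal is correct and follows exactly the route the paper intends: the paper's proof is a one-line appeal to the convolution identity $\lambda_{\pi\times\widetilde{\pi}}(p)=1+\lambda_{\operatorname{Ad}\pi}(p)$ coming from $L(s,\pi\times\widetilde{\pi})=\zeta(s)\,L(s,\operatorname{Ad}\pi)$, after which Theorems \ref{<2>} and \ref{<3>} are applied to $\operatorname{Ad}\pi$ on $\mathrm{GL}_{m^2-1}$.

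One bookkeeping point deserves a sharper statement than ``up to absolute constants.'' Applying Theorem \ref{<3>} to a representation of rank $m^2-1$ gives a prefactor $(m^2-1)^2\asymp m^4$ (and the conductor in the logarithm becomes $\mathfrak{q}_{\operatorname{Ad}\pi}$, which by \eqref{4241} carries an additional factor of order $m$ inside the $\log$), whereas the corollary records only $m^2\,(\log(N\mathfrak{q}_\pi))^3$. This looks like a slight imprecision in the paper's own statement rather than a gap in your argument; since the automorphy hypothesis $\operatorname{Ad}\pi\in\mathfrak{E}_{m^2-1}^b$ is in practice only known for small $m$ (see Remark \ref{pro-L}), the discrepancy is harmless, but if one wants uniformity in $m$ the honest exponent is $m^4$, not $m^2$. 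Your observation that the $w$-bound obtained in part (1) is actually $N^{3/2}(\log N)^{-B}$, stronger than the $N^{2}(\log N)^{-B}$ printed in the corollary, is also correct and again points to a minor typo in the paper.
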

\begin{rem}
Following from Lemma \ref{4} in Appendix, we note that if $N$  satisfies some necessary congruence conditions,  then $\# \mathcal{J}_{1,3}(N)\asymp N^2/(\log N)^{3}$ and $\# \mathcal{J}_{2,5}(N)\asymp N^{3/2}/(\log N)^{5}$ which are the main terms in the above asymptotic formulas. 
\end{rem}
We have made a hypothesis regarding the automorphy of $\operatorname{Ad}\pi$ in the above corollary. In the following remark, we have presented several cases that meet the hypothesis.
\begin{rem}\label{pro-L}
\hspace*{\fill}
\begin{enumerate}[(1)]
\item 
If $m=2$, the adjoint lift of $\pi$ is automorphic. Furthermore, $\operatorname{Ad} \pi$ is cuspidal if and only if $\pi$ is not an automorphic induction of a Hecke character according to \cite{G-J-1978}. For $m>2$, the conjecture that the adjoint lift of $\pi$ is automorphic is widely open.
\item 

For a positive integer \(N_1\) and an even positive integer \(k_{1}\), we denote by \(H_{k_{1}}^*(N_1)\) the finite set of all primitive cusp forms of weight \(k_{1}\) with trivial nebentypus for the Hecke congruence group \(\Gamma_0(N_1)\). Each \(f \in H_{k_{1}}^*(N_1)\) has a Fourier expansion
\[
f(z)=\sum_{n=1}^{\infty} \lambda_f(n) n^{\frac{k_{1}-1}{2}} e(n z)
\]
in the upper half-plane. It is known that Newton and Thorne \cite{N-T-2019, N-T-2020} have proven that \( \mathrm{sym}^{j}f\) corresponds to \( \mathrm{sym}^j \pi_{f}\), which is a cuspidal automorphic representation of \(\mathrm{GL}_{j+1}\) over \(\mathbb{Q}\) for all \(j\ge 1\).
Using well-known identities: for all $j \ge 1$, 
\[
\mathrm{sym}^2\left(\mathrm{sym}^j \pi_{f}\right)=\bigoplus_{0 \le i \le j / 2} \mathrm{sym}^{2 j-4 i} \pi_{f} \quad  \textup{(see \cite[159]{F-H-1991})}
\]
and \begin{equation*}\label{sym-zero}
\left(\mathrm{sym}^j \pi_{f}\right)^{\otimes 2}:=\mathrm{sym}^j \pi_{f} \otimes \mathrm{sym}^j \pi_{f}=\bigoplus_{0 \le r \le j} \mathrm{sym}^{2 r} \pi_{f} \quad \textup{(see \cite[151]{F-H-1991})},
\end{equation*}
where $\mathrm{sym}^0 \pi_{f}$ is the 1-dimensional trivial representation, we have  $$\operatorname{Ad}\mathrm{sym}^j \pi_{f}=\bigoplus_{1 \le r \le j} \mathrm{sym}^{2 r} \pi_{f}.$$In this case, $\operatorname{Ad}\mathrm{sym}^j \pi_{f}$ is cuspidal and automorphic.
\item 
When \(m = 2\) and \(\mathrm{sym}^{j}\pi\) represents the \(j\)th symmetric power of the standard representation of \(\mathrm{GL}_2\), the automorphy of \(\mathrm{sym}^{j}\pi\) was established for \(j = 2\) by Gelbart and Jacquet \cite{G-J-1978}, and for \(j = 3, 4\) by Kim and Shahidi \cite{K-S-2002-2,K-S-2002-1}. Similar to (2), \(\operatorname{Ad}\mathrm{sym}^{2}\pi\) is also automorphic.
\end{enumerate}
\end{rem}

As an application, we present an analogue of the Sato--Tate conjecture. We restrict our analysis to forms with trivial nebentypus, ensuring that all Fourier coefficients are real. Additionally, we assume that \(f\in H_{k_{1}}^*(N_1)\) does not possess complex multiplication (CM), meaning there is no imaginary quadratic field \(K\) such that for \(p \nmid N_1\), \( p\) is inert in \(K\) if and only if \(\lambda_f(p)=0\).  Deligne's proof of the Weil conjectures \cite{D-1974} implies that for each prime \(p\), there exists an angle \(\theta_p \in[0, \pi]\) such that \(\lambda_f(p)=2 \cos \theta_p\).
The Sato--Tate conjecture asserts that if \(f\in H_{k_{1}}^*(N_1)\) is non-CM, then the sequence \(\left\{\theta_p\right\}\) is equidistributed in the interval \([0, \pi]\) with respect to the measure $$\mathrm{d} \mu_{\mathrm{ST}}:=(2 / \pi) \sin ^2 \theta \mathrm{d} \theta.$$ Equivalently,  we obtain that
\begin{equation}\label{pi}
\pi_{f, I}(x):=\#\left\{p \le x: \theta_p \in I, p \nmid N_1\right\} \sim \mu_{\mathrm{ST}}(I) \pi(x) \quad \text { as } x \rightarrow \infty,
\end{equation}
where \(\pi(x)=\#\{p \le x\}\) and \(I=[\alpha, \beta] \subseteq[0, \pi]\). Barnet-Lamb \emph{et al.} \cite{B-G-H-T-2011} proved the Sato--Tate conjecture. For all primitive characters $\chi$,   Thorner \cite{Thorner-2024} has proved that the twisted $L$-functions $L(s,\mathrm{sym}^{j}f\otimes\chi)$ have  no Landau--Siegel zeros. We provide a simpler proof in Lemma \ref{effec}.  Based on this property, we will show that the equidistribution of \(\theta_{p_1}\) remains undisturbed over the set \(\mathcal{J}_{1,3}(N)\). 
Moreover,  as an application of Theorem \ref{<3>}, we will also  investigate the distribution of \(\lambda_{f}(p_{1})\) associated with \((p_1,p_2,p_3)\) in the set \(\mathcal{J}_{1,3}(N)\) under Hypothesis S. We define 
\[
\mathcal{J}_{1,3,f, I}(N):=\{(p_1,p_2,p_3)\in \mathcal{J}_{1,3}(N) : \theta_{p_1} \in I, p_1 \nmid N_1\}.
\]
\begin{theo}\label{z11<2>}
Fix a non-CM form $f\in H_{k_{1}}^*(N_1)$.
\begin{enumerate}
 \item
If $N_1$ is squarefree, then
$$
\# \mathcal{J}_{1,3,f, I}(N)
=\mu_{\mathrm{ST}}(I)\left(\# \mathcal{J}_{1,3}(N)\right)+O\left(\frac{\log (k_1 N_1 \log N)}{ (\log N)^{1/2} }\left(\# \mathcal{J}_{1,3}(N)\right)\right).
$$
If $N_1$ is not squarefree, then
$$
\# \mathcal{J}_{1,3,f, I}(N)
=\mu_{\mathrm{ST}}(I)\left(\# \mathcal{J}_{1,3}(N)\right)+O\left(\frac{\log (k_1 N_1 \log N)}{ (\log N)^{1/4} }\left(\# \mathcal{J}_{1,3}(N)\right)\right).
$$
\item
 Under Hypothesis S,  we have
 $$
\# \mathcal{J}_{1,3,f, I}(N)
=\mu_{\mathrm{ST}}(I)\left(\# \mathcal{J}_{1,3}(N)\right)+O(N^{11/6+\delta/6}(\log N)^{3}).$$
\end{enumerate}

\end{theo}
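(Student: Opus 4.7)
The plan is to apply the Weyl equidistribution framework based on Beurling--Selberg trigonometric polynomials adapted to the Sato--Tate measure, reducing the counting problem to the additive twisted sums over primes already controlled by Theorems \ref{<2>} and \ref{<3>}. The key identity is $U_j(\cos\theta_{p_1}) = \lambda_{\mathrm{sym}^j f}(p_1)$, where $U_j$ is the $j$-th Chebyshev polynomial of the second kind and $\mathrm{sym}^j \pi_f$ is cuspidal automorphic on $\mathrm{GL}_{j+1}$ by Newton--Thorne. Since $\{U_j\}$ is an orthonormal basis of $L^2([0,\pi], d\mu_{\mathrm{ST}})$, expanding the characteristic function $\chi_I$ in this basis will convert the count $\#\mathcal{J}_{1,3,f,I}(N)$ into a linear combination of sums $S_j(N) := \sum_{(p_1,p_2,p_3)\in\mathcal{J}_{1,3}(N)} \lambda_{\mathrm{sym}^j f}(p_1)$.

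First I would construct Beurling--Selberg (or Vaaler) type polynomials $F_M^{\pm}$ of degree $M$ satisfying $F_M^-(\theta) \le \chi_I(\theta) \le F_M^+(\theta)$, an $L^1$ approximation bound $\int_0^{\pi}(F_M^{\pm} - \chi_I)\, d\mu_{\mathrm{ST}} \ll 1/M$, and Chebyshev coefficients $|\widehat{F}_M^{\pm}(j)| \ll \min(1, 1/j)$ for $1 \le j \le M$. Sandwiching then yields
\begin{equation*}
\#\mathcal{J}_{1,3,f,I}(N) = \mu_{\mathrm{ST}}(I)\,\#\mathcal{J}_{1,3}(N) + O\!\left(\frac{\#\mathcal{J}_{1,3}(N)}{M}\right) + \sum_{j=1}^{M} \widehat{F}_M^{\pm}(j)\, S_j(N).
\end{equation*}
For part (1), one verifies that the non-CM assumption on $f$ together with trivial nebentypus gives $\mathrm{sym}^j \pi_f \in \mathfrak{E}_{j+1}^b$ (self-duality is immediate, and non-invariance under quadratic twists follows from non-CM), so Theorem \ref{<2>} applies to bound each $S_j(N)$. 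Since the implied constant in Theorem \ref{<2>} depends on $\pi$, I would use an effective version of its proof that tracks the dependence on the conductor $\mathfrak{q}_{\mathrm{sym}^j \pi_f}$ through Lemma \ref{effec}, which supplies the requisite Landau--Siegel-free zero-free region uniformly in the twisting character. For part (2), apply Theorem \ref{<3>} directly to $\mathrm{sym}^j \pi_f$ to obtain $S_j(N) \ll (j+1)^2 N^{5/3+\delta/3+\varepsilon}(\log(N\mathfrak{q}_{\mathrm{sym}^j \pi_f}))^3$.

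Next comes the optimization of $M$. For part (2), balancing the truncation error $\#\mathcal{J}_{1,3}(N)/M \sim N^2/(M\log^3 N)$ against the Chebyshev-weighted sum $\sum_{j \le M} (1/j) \cdot (j+1)^2 N^{5/3+\delta/3+\varepsilon}(\log N)^3$ leads to $M \asymp N^{(1-\delta)/6}$ and the stated $O(N^{11/6+\delta/6}(\log N)^3)$ bound. For part (1), the optimal $M$ is a small power of $\log N$, and the Chebyshev-weighted total error together with the $1/M$ truncation loss delivers the logarithmic savings $(\log N)^{-1/2}$ (respectively $(\log N)^{-1/4}$). The squarefree/non-squarefree dichotomy in $N_1$ reflects the strength of the log-free zero-free region available for $L(s,\mathrm{sym}^j f \otimes \chi)$ in Lemma \ref{effec}: when $N_1$ is squarefree the cleaner ramification structure permits a stronger Mertens-type saving, whereas for non-squarefree $N_1$ only half that saving survives.

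The main obstacle will be making Theorem \ref{<2>} fully quantitative in the conductor and degree of $\mathrm{sym}^j \pi_f$, since both grow with $j$; without such uniformity, one cannot afford to run the Beurling--Selberg parameter $M$ as a growing function of $N$. A secondary but genuine technical point is confirming $\mathrm{sym}^j \pi_f \neq \mathrm{sym}^j \pi_f \otimes \chi$ for every nontrivial quadratic $\chi$ and every $j \le M$, which ultimately depends on the arithmetic of $N_1$ and is responsible for the two sub-cases in part (1).
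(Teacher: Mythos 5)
Your framework is correct and matches the paper's: expand $\chi_I$ via a Vinogradov/Beurling--Selberg construction (the paper uses Lemma~\ref{71le1} as in Thorner's work) into Chebyshev polynomials $U_j(\cos\theta_{p_1})=\lambda_{\mathrm{sym}^jf}(p_1)$, bound the twisted sums $S_j(N)$ via the circle method, and balance the truncation parameter. But two genuine gaps remain.

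First, in part (2) the stated optimization does not close. Theorem~\ref{<3>} gives $S_j(N)\ll (j+1)^2 N^{5/3+\delta/3+\varepsilon}(\log(N\mathfrak{q}_{\mathrm{sym}^j\pi_f}))^3$. Against $|\widehat{F}_M^\pm(j)|\ll 1/j$ you get $\sum_{j\le M}(1/j)(j+1)^2 S_j\asymp M^2N^{5/3+\delta/3+\varepsilon}$, and balancing this against the truncation loss $N^2/M$ forces $M\asymp N^{(1-\delta)/9}$ and an error of size $N^{17/9+\delta/9}$, strictly worse than the claimed $N^{11/6+\delta/6}$. Your proposed $M\asymp N^{(1-\delta)/6}$ makes the Chebyshev-weighted sum of order $N^2(\log N)^3$, which swamps the main term. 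The paper avoids this by \emph{not} invoking Theorem~\ref{<3>} verbatim; instead it re-runs the proof using the GRC-improved bound \eqref{add} of Lemma~\ref{<1>}(2), which is available precisely because $\mathrm{sym}^j f$ satisfies the Ramanujan conjecture for holomorphic newforms. This replaces the quadratic degree dependence $(j+1)^2$ by a linear one, and only then does $M\asymp N^{(1-\delta)/6}$ deliver $N^{11/6+\delta/6}(\log N)^3$. To repair your argument you must exploit GRC for $\mathrm{sym}^j f$.

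Second, the squarefree dichotomy in part (1) has nothing to do with verifying $\mathrm{sym}^j\pi_f\neq\mathrm{sym}^j\pi_f\otimes\chi$. That verification, and indeed membership in $\mathfrak{E}_{j+1}^b$ at all, is never invoked: Lemma~\ref{effec} (Thorner) already rules out Landau--Siegel zeros for \emph{every} primitive twist $L(s,\mathrm{sym}^jf\otimes\chi)$, so the exceptional-zero bookkeeping of Proposition~\ref{712}/Theorem~\ref{<2>} is bypassed entirely. The dichotomy comes solely from the parameter $y_f\in\{0,2\}$ in Lemma~\ref{effec}: the zero-free region shrinks like $n^{-(2+y_f)}$, i.e.\ $n^{-2}$ for squarefree level versus $n^{-4}$ otherwise. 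This directly controls how large the truncation parameter $\delta'^{-1}$ can be taken (roughly $(\log N)^{1/2}$ versus $(\log N)^{1/4}$) and hence the quality of the logarithmic saving in the error term. Your first sentence on this point is on target; the closing sentence about quadratic-twist invariance is not.
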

It follows from the above result and \eqref{pi} that
\begin{equation}\label{jijijijiji}
\frac{\# \mathcal{J}_{1,3,f, I}(N)}{\# \mathcal{J}_{1,3}(N)} \sim \frac{\pi_{f, I}(N)}{\pi(N)} \quad \text { as } N \rightarrow \infty.
\end{equation}
From a different perspective, it follows from \eqref{jijijijiji} and Lemma \ref{4} that
\begin{equation}\label{eqeq1}
\frac{\# \mathcal{J}_{1,3,f, I}(N)}{ \pi_{f, I}(N)}\sim\frac{\# \mathcal{J}_{1,3}(N)}{\pi(N)}\sim \frac{\mathfrak{G}_{1,3}(N)N}{2(\log N)^{2}}\quad \text { as } N \rightarrow \infty. 
\end{equation}
The Goldbach conjecture states that every even number \(2N\) can be expressed as the sum of two odd primes.  Hardy and Littlewood \cite{H-L-1923} conjectured an asymptotic formula for the number of such representations as follow:
\begin{equation}\label{gold} 
\#\mathcal{J}_{1,2}(2N)\sim2 C_2 \frac{2N}{(\log 2N)^2} \prod_{\mathfrak{p}}\left(\frac{\mathfrak{p}-1}{\mathfrak{p}-2}\right),
\end{equation}
where $\mathfrak{p}$ is an odd prime divisor of $2N$, and
$$
C_2=\prod_{\varpi=3}^{\infty}\left(1-\frac{ 1}{(\varpi-1)^2}\right) .$$

Comparing the result \eqref{eqeq1} with the conjecture \eqref{gold}, it seems that the condition \(\theta_{p_1} \in I\)  does not affect the selection of \(p_2\) and \(p_3\) in \(\mathcal{J}_{1,3}(N)\). Building on results from the Waring--Goldbach problem, it is observed that each \(p_i\) ($1\le i \le k-1$) in \(\mathcal{J}_{k,u}(N)\) covers almost all primes. This suggests that the behavior of these sums, particularly in the context of Goldbach-type problems, can be analyzed probabilistically. Thus, we propose to approach the Goldbach conjecture from a probabilistic perspective, wherein the selection of primes in such sums can be viewed as random variables following certain statistical distributions.

Define the set
\begin{align*}\Omega:=\{&(1,1,N-2),(1,2,N-3),\dots,(1,N-2,1),(2,1,N-3),\\&(2,2,N-4),\dots,(2,N-3,1),\dots,(N-3,1,2),(N-3,2,1),(N-2,1,1)\}.\end{align*}
Let \(\mathcal{F}\) be the set of all subsets of \(\Omega\) and \(\phi\) be an empty set. According to \cite[Chapter 1]{Yong-Zhou-1999}, \((\Omega, \mathcal{F})\) forms a measurable space. Let \(\mathbf{P}\) be a map: \(\mathcal{F} \rightarrow[0,1]\), which is a probability measure on \((\Omega, \mathcal{F})\) satisfying
$$
\left\{\begin{array}{c}
\mathbf{P}(\phi)=0, \quad \mathbf{P}(\Omega)=1,\\
A_i \in \mathcal{F}~( i=1,2, \dots, 2^{\# \Omega}), \quad \mathbf{P}\left( A_i\right)=\frac{\# A_i}{\# \Omega}.
\end{array}\right.
$$
The triple \((\Omega, \mathcal{F}, \mathbf{P})\) forms a probability space. 
We denote two events
$$
A:=\{ (p_1,n_2,n_3)|p_1 \in \mathcal{P}, n_2,n_3 \in \mathbb{N}, N=p_1+n_2+n_3 \}
$$
and
$$
A^{\prime}:=\{ (n_1,p_2,p_3)|n_1 \in \mathbb{N}, p_2,p_3 \in \mathcal{P}, N=n_1+p_2+p_3 \}.
$$
It is evident that \(A \cap A^{\prime}\) is the set \(\mathcal{J}_{1,3}(N)\). Inspired by \eqref{eqeq1}, we propose an interesting conjecture related to the Goldbach conjecture.
\begin{hypothesis1}\label{hypor1}
As for $(\Omega, \mathcal{F}, \mathbf{P})$, the events $A$ and $A^{\prime}$ satisfy 
$$
\mathbf{P}(A \cap A^{\prime})\sim \mathbf{P}(A)\mathbf{P}(A^{\prime}).
$$
\end{hypothesis1}
By the Prime Number Theorem and Lemma \ref{4}, when $N$ is a sufficiently large odd integer, we derive that
$$\mathbf{P}(A)\sim\frac{1}{\log N},\quad\quad \mathbf{P}(A\cap A^{\prime})\sim\frac{\mathfrak{G}_{1,3}(N)}{(\log N)^{3}}.$$ Under Conjecture \ref{hypor1},    $$
\mathbf{P}(A^{\prime})\sim\frac{\mathbf{P}(A \cap A^{\prime})}{\mathbf{P}(A)}\sim\frac{\mathfrak{G}_{1,3}(N)}{(\log N)^{2}}.
$$
Since the number of samples in \(\Omega\) is given by \((N^{2}-3N+2)/2\), we can conclude that
\begin{equation}\label{rou}\# A^{\prime}\sim\frac{\mathfrak{G}_{1,3}(N)N^{2}}{2 (\log N)^{2}}.\end{equation}
 Also, we calculate the number of the samples in $A^{\prime}$, that is,
\begin{equation}\label{cal}
\#\mathcal{J}_{1,2}(N-4)+\#\mathcal{J}_{1,2}(N-5)+\dots+\#\mathcal{J}_{1,2}(4).
\end{equation}
It follows from \eqref{rou} and a rough estimate that 
\begin{equation}\label{rou1}
\sum_{n=2}^{(N-5)/2}\#\mathcal{J}_{1,2}(2n)\sim \frac{\mathfrak{G}_{1,3}(N)N^{2}}{2(\log N)^{2}}. 
\end{equation}
 If Conjecture  \ref{hypor1} holds,  we would obtain the estimate \eqref{rou1},  which shows that  the Goldbach conjecture is right on average. 
\begin{rem}
For \eqref{cal}, we can apply the asymptotic formula \eqref{gold} to derive a more precise estimate:
\[
\sum_{n=2}^{(N-5)/2} 2 C_2 \frac{2n}{(\log 2n)^2} \prod_{\mathfrak{p}}\left(\frac{\mathfrak{p}-1}{\mathfrak{p}-2}\right).
\]
Comparing this result with \eqref{rou1}, we see that Conjecture \ref{hypor1} provides additional support for the validity of the asymptotic formula \eqref{gold}.

\end{rem}
{\bf Structure of this paper.}  The structure of this paper is outlined as follows. In Section 2, we present two important propositions that are used to prove our main theorems. In Section 3, we provide some fundamental definitions and properties of \(L\)-functions along with several useful lemmas. Section 4 is dedicated to the proofs of Propositions \(\ref{712}\)--\(\ref{713}\), leveraging the Rankin–Selberg results, the Mellin transform, and the functional equations of \(L\)-functions. In Section 5, we establish Theorems \(\ref{<2>}\)--\(\ref{<3>}\) by employing the Hardy–Littlewood circle method in conjunction with Propositions \(\ref{712}\)--\(\ref{713}\). In Section 6, an analogue of the Sato–Tate conjecture is presented as an application of the methods used in Theorem \(\ref{<3>}\), denoted as Theorem \(\ref{z11<2>}\). Finally, Section \(\ref{appendix}\) provides several useful appendices.

In this paper, we investigate the apparent oscillatory properties of \(\lambda_{\pi}(n)\) in its interactions with the additive structure of integers. For example, in Theorem \(\ref{z11<2>}\), we observe that the equidistribution of the sequence \(\theta_p\) in the interval \([0, \pi]\) remains unaffected by the additive structure of integers associated with the Waring–Goldbach problem. We hope that the ideas presented in this paper (particularly the combination of concepts from both additive and multiplicative problems) may inspire further applications and advancements in related areas of research.

\section{Exponential estimates}
In relation to the Waring--Goldbach problem, we will utilize the Hardy--Littlewood circle method. This method involves transforming a discrete weighted sum into an integral over \([0,1)\) with respect to related exponential sums such as
 \begin{equation}\label{sum11}
\sum_{n \le x} \Lambda(n) a(n) e\left(n^k \alpha\right),  
\end{equation}
where $a(n)$ is an arithmetic function and $\alpha\in [0,1)$. Estimating such exponential sums is key to solve the problem. For example,  Fouvry and Ganguly \cite{F-G-2014}
obtained  
\begin{equation}\label{Fou-est}
\sum_{n \le x} \Lambda(n) \nu_f(n) e(n \alpha) \ll_{f} x \exp (-c \sqrt{\log x}),
\end{equation}
which was used in the proof of \eqref{V-1}.  
 Hou \emph{et al.} \cite{Hou-16} got
\begin{equation}\label{Hou-est}
\sum_{n \le x} \Lambda(n) A_F(n, 1) e(n \alpha) \ll_F x \exp (-c \sqrt{\log x})
\end{equation}
to  prove  \eqref{V-2}. The nonexistence of Landau--Siegel  zeros  plays an important role in estimating the sums \eqref{Fou-est} and \eqref{Hou-est}.    
 
In this paper, instead of relying on the nonexistence of Landau–Siegel zeros in the twisted \(L\)-functions, we consider the sum \eqref{sum11} for high ranks. Let \(\mathbb{T}\) denote the torus \(\mathbb{Z} \backslash \mathbb{R}\), and identify \(\mathbb{T}\) with the unit interval \([0, 1)\) throughout this paper. For any \(\alpha \in \mathbb{T}\), we provide estimates of \eqref{sum11} with \(a(n) = \lambda_{\pi}(n)\) that do not depend on the generalized Ramanujan conjecture (GRC, as described in Section \ref{3333}) or on the nonexistence of Landau–Siegel zeros in the twisted \(L\)-functions. This is the first time such estimates have been obtained.
\begin{prop}\label{712}
Let $\pi\in\mathfrak{E}_m^b$. For any real number  $\alpha \in \mathbb{T}$, suppose that
there exist two integers $a$ and $q$, such that
$$
\alpha=a/q +\lambda, \ \ |\lambda|\le \frac{1}{qQ}, \ \ (a,q)=1,\ \ 1\le q\le Q\le x^{k},
$$
where $Q$ is a large positive integer. 
For all $A>0$, if $q\le (\log x)^{A}$, then $$
  \sum_{p\le x}\lambda_{\pi}(p)e(p^{k}\alpha)\log p\ll_{\pi,A}
\bigg(1+\frac{x^{k}}{qQ}\bigg)q^{\frac{m+1}{2}}x \exp \left(-c \sqrt{\log x}\right),
  $$  where $c$ is an ineffective constant depending on $A$ and $\pi$.
\end{prop}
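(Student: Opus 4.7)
The plan is to follow the classical major-arc strategy: reduce the exponential sum to a weighted sum of prime-counting functions attached to twists $L(s,\pi\otimes\chi)$ as $\chi$ ranges over Dirichlet characters modulo $q$, and then invoke a Vinogradov-type prime number theorem for these twisted $L$-functions. Since $q\le(\log x)^A$, for primes $p\nmid q$ the value $e(p^ka/q)$ depends only on the residue of $p$ modulo $q$, and orthogonality of Dirichlet characters gives
\begin{align*}
\sum_{p\le x}\lambda_{\pi}(p)e(p^k\alpha)\log p
=\frac{1}{\varphi(q)}\sum_{\chi\,\mathrm{mod}\,q}C_k(\chi,a)\sum_{p\le x}\lambda_{\pi}(p)\bar\chi(p)e(p^k\lambda)\log p
+O\bigl((\log x)^{O(1)}\bigr),
\end{align*}
where $C_k(\chi,a):=\sum_{(b,q)=1}\chi(b)e(b^ka/q)$ is a generalized Gauss sum bounded by $q^{1/2+\varepsilon}$ via the Weil bound, and the error absorbs the contribution of primes dividing $q$. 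Abel summation in the phase $y\mapsto e(y^k\lambda)$ then controls each inner sum by $(1+|\lambda|x^k)\sup_{y\le x}|\psi(y,\pi\otimes\bar\chi)|$, where $\psi(y,\pi\otimes\bar\chi):=\sum_{n\le y}\Lambda(n)\lambda_{\pi}(n)\bar\chi(n)$, and $1+|\lambda|x^k\le 1+x^k/(qQ)$.

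The central ingredient is therefore a Vinogradov-type prime number theorem for the twist: uniformly for $\chi\bmod q$ with $q\le(\log x)^A$, one needs $\psi(y,\pi\otimes\bar\chi)\ll_{\pi,A}\,y\exp(-c\sqrt{\log y})$. This is established via the Perron formula applied to $-L'/L(s,\pi\otimes\bar\chi)$, followed by a contour shift to $\sigma=1-c_1/\sqrt{\log y}$, using a standard zero-free region of the shape $\sigma>1-c/\log\bigl(\mathfrak{q}_{\pi}q^m(|t|+3)\bigr)$; the smallness of $q$ ensures that the analytic-conductor loss is swallowed by the exponential saving. For non-real $\chi$ (and for $\chi$ trivial, where cuspidality of $\pi$ means there is no pole to worry about) such a zero-free region is classical in the $\mathrm{GL}_m$ setting following Moreno and Gelbart--Lapid. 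The genuine obstacle is the possible existence of a Landau--Siegel zero of $L(s,\pi\otimes\chi)$ when $\chi$ is primitive real quadratic: its presence would inject a main term of size $y^{\beta}/\beta$ with $\beta$ close to $1$ and annihilate the desired saving.

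This is precisely the point at which the hypothesis $\pi\in\mathfrak{E}_m^b$ intervenes: the condition $\pi\not\cong\pi\otimes\chi$ for every nontrivial primitive quadratic $\chi$ allows one to invoke the Hoffstein--Ramakrishnan mechanism and its higher-rank refinements (Banks, Humphries--Thorner, and related work), which exploit positivity of the Dirichlet coefficients of a suitable Rankin--Selberg product built from $\pi$ and $\pi\otimes\chi$, to exclude any real zero of $L(s,\pi\otimes\chi)$ lying too close to $s=1$. This yields the desired PNT with an ineffective constant $c=c(\pi,A)$. Reassembling the Gauss-sum bound $C_k(\chi,a)\ll q^{1/2+\varepsilon}$, the $q^{m/2}$ conductor-dependence accumulated along the contour shift, the partial-summation prefactor $1+x^k/(qQ)$, and the outer sum over $\varphi(q)$ characters then produces the announced bound
$$
\sum_{p\le x}\lambda_{\pi}(p)e(p^k\alpha)\log p
\ll_{\pi,A}\bigl(1+x^k/(qQ)\bigr)\,q^{(m+1)/2}\,x\exp\bigl(-c\sqrt{\log x}\bigr),
$$
with the main obstacle throughout being the elimination of Landau--Siegel zeros for the quadratic-character twists, handled exactly by the restriction to $\mathfrak{E}_m^b$.
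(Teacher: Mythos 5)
Your high-level strategy --- orthogonality of characters mod $q$ to split the sum into twisted prime sums, partial summation to extract the $(1+x^k/(qQ))$ prefactor, and a Vinogradov-type prime number theorem for $L(s,\pi\otimes\chi)$ --- is the same as the paper's. The paper implements the orthogonality step slightly differently (it splits $n=dl$ with $d\mid q$ and then bounds the $\ell^1$-average of the generalized Gauss sums over $\chi\bmod q/d$ by a Parseval identity, giving $\varphi(q/d)^{3/2}$, rather than quoting a Weil bound for each individual sum), but that is a cosmetic difference that yields the same $q^{1/2}$ factor.

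The genuine gap is in your treatment of the exceptional real zero. You assert that the condition $\pi\in\mathfrak{E}_m^b$ lets one \emph{exclude} Landau--Siegel zeros of $L(s,\pi\otimes\chi)$ for quadratic $\chi$ via the Hoffstein--Ramakrishnan mechanism and higher-rank refinements. No such result is known for general $\mathrm{GL}_m$; the paper even emphasizes this in the introduction as precisely the obstruction that motivates the present method. What the hypothesis $\pi\in\mathfrak{E}_m^b$ actually buys is the Jiang--L\"u--Wang zero-free region (Lemma~\ref{th6.1}): for primitive $\chi\bmod q\le Q$, $L(s,\pi\otimes\chi)$ is zero-free in the standard region \emph{with at most one exception}, and if the exceptional $\chi_1$ exists its real zero satisfies the Siegel-type, \emph{ineffective} bound $\beta_1\le 1-c_\pi(\varepsilon_0)Q^{-\varepsilon_0}$ for every $\varepsilon_0>0$. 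The proof then sets $\varepsilon_0=1/(2A)$; combined with $q\le(\log x)^A$ this gives $\beta_1\le 1-c_\pi/(\log x)^{1/2}$, and the contribution $x^{\beta_1}/\beta_1$ of the putative exceptional zero is absorbed into $x\exp(-c\sqrt{\log x})$. This is where the hypothesis $q\le(\log x)^A$ is essential, and it explains why the proposition's constant $c$ is stated to be \emph{ineffective} and to depend on $A$: if one could genuinely exclude the exceptional zero as you claim, the resulting constant would be effective and the restriction on $q$ could be relaxed, contradicting the stated form of the proposition. You should replace the "no Siegel zero" claim by this ineffective Siegel-type bound on the exceptional zero, and make explicit the choice $\varepsilon_0=1/(2A)$ and its interaction with the constraint $q\le(\log x)^A$.
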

  
\begin{prop}\label{713}
 Let $\pi\in\mathfrak{E}_m$, $\varepsilon$ be a sufficiently small positive number.  For any real number  $\alpha \in \mathbb{T}$, suppose that
there exist two integers $a$ and $q$, such that
$$
\alpha=a/q +\lambda, \ \ |\lambda|\le \frac{1}{qQ}, \ \ (a,q)=1,\ \ 1\le q\le Q,
$$
where $Q$ is a large positive integer. 
Under Hypothesis S,   $$
  \sum_{p\le x}\lambda_{\pi}(p)e(p^{k}\alpha)\log p\ll
m^{2}\bigg(1+\frac{x^{k}}{qQ}\bigg)q^{\frac{1}{2}}x^{\delta+\varepsilon}(\log (x \mathfrak{q}_{\pi})) ^{3}+x^{\frac34}.
  $$
  \end{prop}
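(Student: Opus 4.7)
The plan is to mirror the blueprint of Proposition \ref{712}, but to replace the $\exp(-c\sqrt{\log x})$ savings by the polynomial savings $x^{\delta-1+\varepsilon}$ coming from Hypothesis S, which in turn lets $q$ range freely up to $Q$. First, I would use the Dirichlet approximation $\alpha=a/q+\lambda$ to factor $e(\alpha p^k)=e(ap^k/q)\,e(\lambda p^k)$, and remove the factor $e(\lambda p^k)$ by partial summation; since $\frac{d}{dy}e(\lambda y^k)\ll |\lambda| y^{k-1}\le x^k/(qQ y)$, this produces the prefactor $1+x^k/(qQ)$ and reduces matters to a uniform-in-$y$ bound, for every residue $b\pmod q$, on
$$S(y;q,b):=\sum_{\substack{p\le y\\ p\equiv b\,(\mathrm{mod}\,q)}}\lambda_\pi(p)\log p, \qquad y\le x.$$

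Next, I would detect the congruence class $b\pmod q$ by orthogonality of Dirichlet characters and rewrite $S(y;q,b)$ as a linear combination of the character-twisted prime sums $T(y,\chi):=\sum_{p\le y}\chi(p)\lambda_\pi(p)\log p$. For each $\chi\pmod q$, $L(s,\pi\otimes\chi)$ is an automorphic $L$-function on $\mathrm{GL}_m$ of conductor at most $q^m\mathfrak{q}_\pi$, and Mellin inversion applied to a smoothed indicator of $\{n\le y\}$ gives an identity of the shape
$$T(y,\chi)=-\frac{1}{2\pi i}\int_{(1+\varepsilon)}\frac{L'}{L}(s,\pi\otimes\chi)\,y^{s}\,\frac{ds}{s}+O(x^{3/4}),$$
where the $x^{3/4}$ error absorbs the contribution of the prime powers $p^j$ with $j\ge 2$ weighted by $\lambda_\pi(p^j)$ (handled through the Rankin--Selberg-type bound $\sum_{n\le x}|\lambda_\pi(n)|^2\ll x^{1+\varepsilon}$) as well as the smoothing tails.

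The third step is to shift the contour from $\mathrm{Re}(s)=1+\varepsilon$ down to $\mathrm{Re}(s)=\delta+\varepsilon$. Under Hypothesis S, $L(s,\pi\otimes\chi)$ has no zero to the right of $\mathrm{Re}(s)=\delta$; the only possible singularity crossed is a simple pole at $s=1$, which occurs precisely when $\pi\otimes\chi$ contains the trivial representation as an isobaric constituent, in which case one separates out the principal term and verifies that it is either absorbed into the main term for $\pi\in\mathfrak{E}_m$ or cancelled by averaging over $\chi$. On the shifted line, a Hadamard-factorisation argument for $L(s,\pi\otimes\chi)$ together with the standard log-conductor bound yields $\frac{L'}{L}(s,\pi\otimes\chi)\ll m^{2}\log(xq\mathfrak{q}_\pi)$, so the shifted integral is bounded by $m^{2}\,y^{\delta+\varepsilon}(\log(x\mathfrak{q}_\pi))^{2}$ after absorbing $\log q$ into the $x^{\varepsilon}$ factor. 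Inverting the character sum through the Gauss-sum identity $\chi(b)=\tau(\chi)^{-1}\sum_{a}\overline{\chi}(a)e(ab/q)$ together with $|\tau(\chi)|\le\sqrt{q}$ for primitive $\chi$ introduces the factor $q^{1/2}$, and an extra logarithm comes from the truncation of the Mellin integral, producing the $(\log(x\mathfrak{q}_\pi))^{3}$ power. Combining with the partial-summation prefactor $1+x^k/(qQ)$ then reproduces the stated estimate.

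The main obstacle lies in the characters $\chi\pmod q$ for which $\pi\otimes\chi$ is not itself cuspidal, since Hypothesis S is phrased for elements of $\mathfrak{E}_m$. One must pass to the isobaric decomposition of $\pi\otimes\chi$, re-apply Hypothesis S to each cuspidal constituent, and perform the contour shift and Gauss-sum inversion uniformly over all such pieces while keeping both the dimensional loss (which is what produces the $m^{2}$ factor) and the conductor dependence (which is what produces the $q^{1/2}$ factor and the cubic $\log(x\mathfrak{q}_\pi)$) sharp. A secondary technicality is the uniform bound on $L'/L(s,\pi\otimes\chi)$ on the shifted line, which requires a Phragm\'en--Lindel\"of convexity estimate for $L(s,\pi\otimes\chi)$ together with careful control of the ramified local factors at primes dividing $q\mathfrak{q}_\pi$; once these uniformities are established, assembling the pieces yields the proposition.
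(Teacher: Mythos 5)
Your proposal captures the paper's overall blueprint faithfully: Dirichlet approximation and partial summation to produce the prefactor $1+x^k/(qQ)$, decomposition into characters $\chi\bmod q$, passing to primitive characters, a Mellin-inversion/explicit-formula argument applied to $L'/L(s,\pi\otimes\chi')$ with a smoothed indicator, a contour shift to $\mathrm{Re}(s)=\delta+\varepsilon$ under Hypothesis S, and the standard log-conductor bound on the shifted line. The $x^{3/4}$ term absorbing the $p^j$ ($j\ge2$) contributions via the Rankin--Selberg bound $\sum_{n\le x}\Lambda(n)|a_\pi(n)|^2\ll x$ is also exactly what the paper does in \eqref{label0}--\eqref{label1}. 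Two points, however, are off.

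First, the factor $q^{1/2}$. You attribute it to the Gauss-sum inversion $\chi(b)=\tau(\chi)^{-1}\sum_a\overline\chi(a)e(ab/q)$ with $|\tau(\chi)|\le\sqrt q$. That identity applies only to $k=1$, where the exponential depends linearly on $p$. For general $k$ the relevant complete sum is $\sum_h\overline\chi(h)e(ad^kh^k/q)$, whose individual modulus need not be $O(\sqrt q)$. The paper instead bounds these sums on average over $\chi$ by Cauchy--Schwarz plus Parseval (equation \eqref{5.3}), getting $\varphi(q/d)^{3/2}$ for the full $\chi$-sum without any algebraic input on the exponential sum itself; this is what uniformly produces the $q^{1/2}$ for all $k$. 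Your argument as written would not cover $k\ge 2$ (i.e.\ the $w_\pi(N)$ case) without this modification.

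Second, the paragraph on non-cuspidal $\pi\otimes\chi$ and isobaric decomposition is a non-issue. For $\pi\in\mathfrak{E}_m$ with $m\ge2$ and any Hecke character $\chi$, the twist $\pi\otimes\chi$ is again a cuspidal automorphic representation of $\mathrm{GL}_m$, so $L(s,\pi\otimes\chi)$ is entire; no pole at $s=1$ is crossed during the contour shift and no decomposition into cuspidal constituents is required. (The only caveat one could raise is that $\pi\otimes\chi$ may not have trivial central character, so strictly it sits just outside $\mathfrak{E}_m$ as the paper defines it; but this is a cosmetic issue with the scope of Hypothesis S rather than an analytic obstacle, and the paper silently applies Lemma \ref{<1>} to $\pi\otimes\chi'$ without comment.) You have identified a phantom difficulty while leaving unexamined the genuine issue of the exponential sum for $k\ge2$.
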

\begin{rem}
Compared to similar results previously, Proposition \ref{713} states the relation between the upper bound of the exponential  sum concerning $\lambda_{\pi}(p)$ and $\pi$  for the first time. 
\end{rem}

Our new approach can be applied not only to derive \eqref{V-1} and \eqref{V-2}, but also to extend similar results to higher ranks. Consider Dirichlet rational approximation, where there exist integers \(a\) and \(q\) such that
\begin{equation}\label{za}
\alpha = \frac{a}{q} + \lambda, \quad |\lambda| \le \frac{1}{qQ}, \quad (a, q) = 1, \quad 1 \le q \le Q,
\end{equation}
for any \(\alpha \in \mathbb{T}\), with \(Q\) being a large positive integer. When \(\alpha\) is in intervals around rational numbers \(a/q\) with large denominators (i.e., in the so-called minor arcs), we can apply results for the following exponential sums (refer to Lemmas \ref{Vino-Harman1}--\ref{Ren} below):
\[
\sum_{m \le x} \Lambda(m) e\left(m^k \alpha\right),
\]
where \(x\) is a large real number. In contrast, when \(\alpha\) lies in the major arcs, we use Propositions \ref{712}--\ref{713}. By employing two distinct estimations, one for the major arcs and the other for the minor arcs, we can bypass the limitations of previous methods in such problems, thereby obtaining Theorems \ref{<2>}--\ref{<3>}.

 \section{Background on $L$-functions}\label{3333}
\subsection{Standard $L$-functions}
\hspace*{\fill}\\

Let $m \ge 1$ be an integer.
 Fix $\pi=\otimes_{p} \pi_{p} \in \mathfrak{E}_{m}$, where $p$ ranges over all primes.
The standard function $L(s, \pi)$  is given by the Dirichlet series 
\begin{equation}\label{716}
\begin{aligned}
L(s, \pi) &=\prod_p L(s,\pi_{p})=\sum_{n=1}^{\infty} \frac{\lambda_\pi(n)}{n^s},\end{aligned}
\end{equation}
with both the series and the product converging absolutely for $\mathrm{Re}(s)>1$. For each $p$, the local factor $L(s,\pi_{p})$ is defined in terms of Satake parameters $\{\alpha_{1,\pi}(p),\dots,\alpha_{m,\pi}(p)\}$ by
$$
L(s,\pi_{p}):= \prod_{j=1}^m\left(1-\frac{\alpha_{j, \pi}(p)}{p^s}\right)^{-1}=\sum_{i=0}^{\infty} \frac{\lambda_\pi\left(p^i\right)}{p^{is}}.
$$
Let $\mathfrak{q}_{\pi}$ denote the conductor of $\pi$. For $1\le j\le m$,  we have $\alpha_{j,\pi}(p)\neq 0$ when $p\nmid \mathfrak{q}_{\pi}$. When $p\mid \mathfrak{q}_{\pi}$, the case might be that $\alpha_{j,\pi}(p)=0$.
At the archimedean place of $\mathbb{Q}$, there exist $m$ complex Langlands parameters $\mu_{\pi}(j)$, from which we define the Dirichlet series
\begin{equation}\label{lala}
L_{\infty}(s, \pi):=\pi^{-m s / 2} \prod_{j=1}^m \Gamma\left(\frac{s+\mu_\pi(j)}{2}\right).
\end{equation}
Let $\tilde{\pi}$ denote the contragredient
representation of $\pi \in \mathfrak{E}_m$, which is also an irreducible cuspidal automorphic representation in $\mathfrak{E}_m$. For each $p \le \infty$, we have the following equations
$$
\big\{\alpha_{j, \tilde{\pi}}(p):1\le j\le m\big\}=\big\{\overline{\alpha_{j, \pi}(p)}:1\le j\le m\big\}
$$
and
$$
\big\{\mu_{\tilde{\pi}}(j):1\le j\le m\big\}=\big\{\overline{\mu_\pi(j)}:1\le j\le m\big\}.
$$
For a nontrivial $\pi$, the complete $L$-function is defined by
$$
\Lambda(s, \pi):=\mathfrak{q}_{\pi}^{\frac{s}{2}} L(s, \pi) L_{\infty}(s, \pi).
$$
The complete $L$-function $\Lambda(s, \pi)$ has an analytic continuation  to the entire complex plane, and is of finite order. Moreover, it satisfies the functional equation
$$
\Lambda(s, \pi)=W_{\pi} \Lambda(1-s, \tilde{\pi}),
$$
where $W_{\pi}$ is the root number (a complex number of magnitude $1$).
For all primes $p$ and $1 \le j \le m$, it follows from \cite{K-2003} $(2 \le m \le 4)$ and \cite{L-S-1995} $(m \ge 5)$ that  the best known bound is
$$
\left|\alpha_{j, \pi}(p)\right| \le p^{\theta_{m}}  \quad \text { and } \quad -\mathrm{Re}(\mu_{\pi}(j))\le \theta_{m},
$$
where
\begin{equation}\label{saa}
\theta_{2}=\frac{7}{64}, \quad \theta_{3}=\frac{5}{14}, \quad \theta_{4}=\frac{9}{22}, \quad \theta_{m}=\frac{1}{2}-\frac{1}{m^{2}+1}\ (m \ge 5).
\end{equation}
The generalized Ramanujan conjecture predicts that
$$
\theta_{m}=0$$ for all $m\ge 1.$
The analytic conductor of $\pi$ is defined by
\begin{equation}\label{chi}
C(\pi, t):=\mathfrak{q}_{\pi} \prod_{j=1}^{m}\left(1+\left|i t+\mu_{\pi}(j)\right|\right), \quad C(\pi):=C(\pi, 0).
\end{equation}

Taking the logarithmic derivative for $L(s, \pi)$, we obtain that for $\mathrm{Re}(s)>1$,
$$
-\frac{L^{\prime}}{L}(s, \pi)=\sum_{p} \sum_{k=1}^{\infty} \frac{a_{\pi} (p^k) \log p}{p^{k s}}=\sum_{n=1}^{\infty} \frac{a_\pi(n) \Lambda(n)}{n^s},
$$
where
\begin{equation}\label{715}
a_\pi\left(p^k\right)=\sum_{j=1}^m \alpha_{j, \pi}(p)^k .
\end{equation}
\begin{lemma}\label{<1>}
Let $\pi \in \mathfrak{E}_{m}$, $m\ge 2$, and $\varepsilon$ be a  sufficiently small positive number. 
\begin{enumerate}
 \item 
Under Hypothesis S, 
\begin{equation}\label{add-1}
\sum_{n\le x} \Lambda(n)a_{\pi}(n)  \ll_{\varepsilon} mx^{\delta+\varepsilon} (\log x)^{2} \log \mathfrak{q}_{\pi}.
\end{equation}

 \item 
Under Hypothesis S and GRC, the upper bound is improved to 
\begin{equation}\label{add}
\sum_{n\le x} \Lambda(n)a_{\pi}(n)  \ll x^{\delta}(\log x)^{2}(\log \log (x^{m} C(\pi)))^{2}.
\end{equation}
\end{enumerate}
\end{lemma}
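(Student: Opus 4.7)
The plan is to handle both parts by applying Perron's formula to the Dirichlet series $-L'/L(s,\pi) = \sum_n \Lambda(n) a_\pi(n) n^{-s}$ (which converges for $\mathrm{Re}(s)>1$), shifting the contour leftward into the zero-free region provided by Hypothesis S, and then controlling the integrand on the new vertical line.

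For part (1), I would start from the truncated Perron identity
$$
\sum_{n \le x} \Lambda(n) a_\pi(n) = \frac{1}{2\pi i}\int_{c-iT}^{c+iT} \left(-\frac{L'}{L}(s,\pi)\right)\frac{x^s}{s}\,ds + E_1,
$$
with $c = 1 + 1/\log x$, where $E_1$ is controlled using the trivial bound $|a_\pi(n)| \ll m n^{\theta_m}$ from \eqref{saa}. Since $\pi \in \mathfrak{E}_m$ with $m \ge 2$ is cuspidal, $L(s,\pi)$ is entire, and by Hypothesis S it is zero-free in $\mathrm{Re}(s) > \delta$. Thus the contour can be shifted to $\mathrm{Re}(s) = \delta + \varepsilon$ without crossing any singularity, and the horizontal segments at heights $\pm T$ can be absorbed into the error by choosing $T$ a suitable polynomial power of $x$.

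The crucial input is a uniform estimate on $L'/L$ along the new line. Using the Hadamard product for the completed function $\Lambda(s,\pi)$ combined with the Weyl-type count of zeros $N(T,\pi) \ll m T \log(\mathfrak{q}_\pi (T+2))$ (which follows from the functional equation and the archimedean factor \eqref{lala}--\eqref{chi}), I would establish
$$
\left|\frac{L'}{L}(\delta+\varepsilon+it,\pi)\right| \ll_{\varepsilon}\, m\log\bigl(\mathfrak{q}_\pi(|t|+2)\bigr),
$$
where the separation $|s-\rho| \ge \varepsilon$ for every zero $\rho$ is what makes the sum over zeros converge cleanly. Substituting this into the shifted integral and optimizing $T$ produces the bound $m x^{\delta+\varepsilon}(\log x)^2 \log \mathfrak{q}_\pi$ in part (1).

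For part (2), GRC upgrades the pointwise bound to $|a_\pi(p^k)| \le m$, so that $|\Lambda(n) a_\pi(n)| \le m\Lambda(n)$ uniformly. I would first pass to a smoothly truncated sum $\sum_n \Lambda(n) a_\pi(n) \Phi(n/x)$ against a cutoff $\Phi$ supported in $[1, 1+\eta]x$ with $\eta$ a small parameter to be optimized, express it as a Mellin--Barnes integral, and perform the contour shift exactly to $\mathrm{Re}(s)=\delta$. The saving of $(\log\log)^2$ versus $\log$ comes from the fact that under GRC one may average $|L'/L|^2$ over a short window of length $\asymp 1/\log\log(x^m C(\pi))$ and apply Cauchy--Schwarz, avoiding the crude pointwise $\log C(\pi,t)$ loss. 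The final transition from the smoothed sum back to the sharp cutoff is handled via a Brun--Titchmarsh-type bound on the prime contribution in the short interval $[x,(1+\eta)x]$, and the parameter $\eta$ is selected so that the smoothing error and the saved logarithmic factor balance.

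The principal obstacle I anticipate is obtaining the estimate on $L'/L$ that depends \emph{linearly} (rather than exponentially) on the rank $m$. The archimedean local factor $L_\infty(s,\pi)$ contributes $m$ copies of $\Gamma$-factors whose logarithmic derivatives would each, if handled naively via Stirling, introduce a factor $\log(|t|+2)$; one must bundle these into the $\log C(\pi,t)$ term and align them with the zero-count to avoid losing a factor $m^2$. For part (2), the additional delicacy is the smoothing/desmoothing transition, where keeping only a $(\log\log)^2$ loss hinges on exploiting GRC in both the contour estimate and the short-interval prime bound simultaneously.
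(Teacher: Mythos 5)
Your part (1) arrives at the same conclusion by a somewhat different route that should nevertheless work. The paper does not use the sharp cutoff at all: it passes to a smooth trapezoidal weight $\phi$ supported in $[0,x+y]$, and the transition from the sharp sum to the smoothed sum is controlled not by the pointwise bound $|a_\pi(n)| \ll m n^{\theta_m}$ but by Cauchy--Schwarz together with the Rankin--Selberg input $\sum_{n\le x}\Lambda(n)|a_\pi(n)|^2 \ll x$ from \eqref{(1)} and the Prime Number Theorem in short intervals. It then writes the smoothed sum as a Mellin integral of $-L'/L$ against $\hat\phi$, shifts to the line $\mathrm{Re}(s)=\delta+\varepsilon$, and estimates. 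Your truncated Perron formula with a sharp cutoff and a polynomial choice of $T$ is a parallel presentation of the same contour-shift argument, and the pointwise bound on $L'/L$ you would re-derive from the Hadamard product and the zero count is exactly the ingredient the paper imports from \cite[(5.27)--(5.28)]{I-K-2004}. So part (1) is an acceptable alternative, arguably cleaner in that it bypasses the Rankin--Selberg input and short-interval prime counting.

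Part (2) has a genuine gap. You propose to shift the contour exactly to $\mathrm{Re}(s)=\delta$ and then salvage the estimate by averaging $|L'/L|^2$ over a short window. But Hypothesis~S only forbids zeros in the open half-plane $\mathrm{Re}(s)>\delta$; zeros \emph{on} the line $\mathrm{Re}(s)=\delta$ are allowed. At such a zero $L'/L$ has a pole, so neither the pointwise bound nor the short-window mean square of $L'/L$ on that line is finite, and the contour cannot be pushed there. The paper sidesteps this entirely: under GRC the hypotheses of the explicit formula \cite[(5.53)]{I-K-2004} hold, giving
$$
\sum_{n\le x}\Lambda(n)a_\pi(n)=\sum_{|\gamma|\le T}\frac{x^\rho-1}{\rho}+O\!\left(\frac{x}{T}(\log x)\log\!\left(x^m C(\pi)\right)\right),
$$
where each zero $\rho=\beta+i\gamma$ satisfies $\beta\le\delta$ by Hypothesis~S, so $|x^\rho|\le x^\delta$, and the sum $\sum_{|\gamma|\le T}1/|\rho|$ is controlled by the vertical zero-count \cite[(5.27)]{I-K-2004}; the choice $T=x^{1-\delta}(\log x)\log(x^m C(\pi))$ then balances the two terms and produces the bound with no $\varepsilon$-loss. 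This explicit-formula step --- replacing the contour integral by a sum over zeros, so that nearness of zeros to the abscissa of integration never enters --- is the essential idea missing from your proposal; the smoothing, Brun--Titchmarsh considerations, and $\eta$-optimization you describe do not by themselves circumvent poles of $L'/L$ on the line $\mathrm{Re}(s)=\delta$.
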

\begin{proof}
Let $y$ be a parameter satisfying the inequality $1 \le y \le x$, will be determined later. Denote $\phi(z)$ as the function with support on the interval $[0, x+y]$, satisfying the conditions $\phi(z)=1$ for $1 \le z \le x$, and $|\phi(z)| \le 1$ elsewhere.  By the Cauchy--Schwarz inequality,  \eqref{(1)} and the Prime Number Theorem in short intervals \cite{Huxley-72}, we can achieve a smoother expression by writing
\begin{align}\label{<1>-1}
\sum_{n\le x} \Lambda(n)a_{\pi}(n)=\sum_n \Lambda(n)a_{\pi}(n) \phi(n)+O(\sqrt{x y}), \quad x^{7/12}\ll y\ll x.
\end{align}
For $0 \le z \le x+y$, we take
$$
\phi(z)=\min \left(\frac{z}{y}, 1,1+\frac{x-z}{y}\right)
$$
and set  $\phi(z)=0$ for $z>x+y$. The Mellin transform of $\phi(z)$ satisfies
$$
\hat{\phi}(s)=\int_0^{x+y} \phi(z) z^{s-1} \mathrm{d} z \ll \frac{x^\sigma}{|s|} \min \left(1, \frac{x}{|s| y}\right)
$$
for $s=\sigma+i t, \frac{1}{2} \le \sigma \le 2$.
The sum on the right-hand side of \eqref{<1>-1} can be expressed as
$$
\sum_n \Lambda(n)a_{\pi}(n) \phi(n)=\frac{1}{2 \pi i} \int_{(2)}-\frac{L^{\prime}}{L}(s, \pi) \hat{\phi}(s) \mathrm{d} s,
$$
where the integral converges absolutely. Our operations will be carried out within the region defined by $$
 \mathcal{Z}:=\left\{s=\sigma+i t \mid \sigma= \delta+\varepsilon\right\}.
$$
 All the zeros of $L(s, \pi)$ are at least distance $\varepsilon$ from $\mathcal{Z}$. Therefore, it follows from \cite[(5.27)]{I-K-2004} and \cite[(5.28)]{I-K-2004}  that for $s \in \mathcal{Z}$,
$$
-\frac{L^{\prime}}{L}(s, \pi) \ll \varepsilon^{-1}(\log C(\pi)+m\log(|t|+3)).
$$
By moving the integration from the line $\mathrm{Re}(s)=2$ to $\mathcal{Z}$ and combining the above inequality, we apply Cauchy's theorem to derive the following:
\begin{align*}
\sum_n \Lambda(n)a_{\pi}(n) \phi(n)&=\frac{1}{2 \pi i} \int_{\mathcal{Z}}-\frac{L^{\prime}}{L}(s, \pi) \hat{\phi}(s) \mathrm{d}s\nonumber \\
&\ll \varepsilon^{-1}\int_{0}^{x/y}\frac{x^\sigma}{t+1} ( \log C(\pi)+m\log(t+3))\mathrm{d} t\nonumber\\
&\quad +\varepsilon^{-1}\int_{x/y}^{\infty}\frac{x^{\sigma+1}}{t^2y} ( \log C(\pi)+m\log(t+3))\mathrm{d} t \nonumber\\
&\ll \varepsilon^{-1} x^{\delta+\varepsilon} (m \log \mathfrak{q}_{\pi}+m\log(x/y+3))\log (x/y).
\end{align*}
We take $y=x\exp(-\sqrt{\log x})$ into the above inequality and combine it with \eqref{<1>-1}, thereby obtaining \eqref{add-1}. 

 Under GRC, the difference is that we can use \cite[(5.53)]{I-K-2004} to obtain that
$$
\sum_{n\le x} \Lambda (n)a_{\pi}(n) =\sum_{|\gamma| \le T} \frac{x^\rho-1}{\rho}+O\left(\frac{x}{T}(\log x) \log \left(x^m C(\pi)\right)\right),$$
where $\rho=\beta+i \gamma$ runs over the zeros of $L(s, \pi)$ in the critical strip of height up to $T$ with $1 \le T \le x$.  The implied constant is absolute.
Using  \cite[(5.27)]{I-K-2004} and choosing $T=x^{1-\delta}(\log x)\log(x^{m}C(\pi))$, we find that
$$
\sum_{n\le x} \Lambda(n)a_{\pi}(n)  \ll x^{\delta}(\log x)^{2}(\log \log (x^{m} C(\pi)))^{2}.
$$

\end{proof}

\subsection{Rankin--Selberg $L$-functions}
\hspace*{\fill}\\

Let $\pi^{\prime}=\otimes_{p} \pi_{p}^{\prime} \in \mathfrak{E}_{m^{\prime}}$ $(m^{\prime} \ge 1)$ and $\pi=\otimes_{p} \pi_{p} \in \mathfrak{E}_{m}$ $(m \ge 1)$. The Rankin--Selberg $L$-function $L\left(s, \pi \times \pi^{\prime}\right)$ associated to $\pi$ and $\pi^{\prime}$  is of the form
$$
L\left(s, \pi \times \pi^{ \prime}\right)=\prod_{p} L\left(s, \pi_{p} \times \pi_{p}^{\prime }\right)=\sum_{n=1}^{\infty} \frac{\lambda_{\pi \times \pi^{\prime}}(n)}{n^{s}}
$$
for $\mathrm{Re}(s)>1$. For each (finite) prime $p$, the inverse of the local factor $L\left(s, \pi_{p} \times \pi_{p}^{\prime}\right)$ is defined as a polynomial in $p^{-s}$ of degree not exceeding $ m m^{ \prime}$:
\begin{equation}\label{rsloc}
L\left(s, \pi_{p} \times \pi_{p}^{ \prime}\right)^{-1}:=\prod_{j=1}^{m} \prod_{j^{ \prime}=1}^{m^{ \prime}}\left(1-\frac{\alpha_{j, j^{\prime}, \pi \times \pi^{ \prime}}(p)}{p^{s}}\right),
\end{equation}
where $\alpha_{j, j^{ \prime}, \pi \times \pi^{ \prime}}(p)$ are  suitable complex numbers.  Given $\theta_{m}$ and $\theta_{m^{\prime}}$ as in (\ref{saa}), we have the pointwise bound
\begin{equation}\label{chirs}
\left|\alpha_{j, j^{ \prime}, \pi \times \pi^{ \prime}}(p)\right| \le p^{\theta_{m}+\theta_{m^{ \prime}}} \le p^{1-\frac{1}{m m^{ \prime}}}.
\end{equation}
If $p \nmid \mathfrak{q}_{\pi} \mathfrak{q}_{\pi^{ \prime}}$, we have 
\begin{equation}\label{df}
\bigg\{\alpha_{j, j^{ \prime}, \pi \times \pi^{ \prime}}(p): j \le m, j^{ \prime} \le m^{ \prime}\bigg\}=\bigg\{\alpha_{j, \pi}(p) \overline{\alpha_{j^{\prime }, \pi^{ \prime}}(p)}: j \le m, j^{ \prime} \le m^{\prime }\bigg\}.
\end{equation}
At the archimedean place of $\mathbb{Q}$, there are $m m^{\prime}$ complex Langlands parameters $\mu_{\pi\times \pi^{\prime}}\left(j, j^{\prime}\right)$ from which we define the Dirichlet series
$$
L_{\infty}\left(s, \pi \times \pi^{\prime}\right):=\pi^{-\frac{m m^{\prime} s}{2}} \prod_{j=1}^{m} \prod_{j^{\prime}=1}^{m^{\prime}} \Gamma\left(\frac{s+\mu_{\pi \times \pi^{\prime}}(j, j^{\prime})}{2}\right) .
$$
These parameters satisfy the pointwise bound
$$
\mathrm{Re}\left(\mu_{\pi \times \pi^{\prime}}\left(j, j^{\prime}\right)\right) \ge-\theta_{m}-\theta_{m^{\prime}} .
$$
Let $\mathfrak{q}_{\pi \times \pi^{\prime}}$ denote the conductor of $\pi \times \pi^{\prime}$. As with $L(s, \pi\times\pi^{\prime})$, we define the analytic conductor $C\left(\pi \times \pi^{\prime}\right)$ by
\begin{equation}\label{4242}
C\left(\pi \times \pi^{\prime}, t\right):=\mathfrak{q}_{\pi \times \pi^{\prime}} \prod_{j=1}^{m} \prod_{j^{\prime}=1}^{m^{\prime}}\left(1+\left|i t+\mu_{\pi \times \pi^{\prime}}\left(j, j^{\prime}\right)\right|\right), \ C\left(\pi \times \pi^{\prime}\right):=C\left(\pi \times \pi^{\prime}, 0\right).
\end{equation}
The combined work of  \cite{B-H-1997} alongside \cite[Appendix]{S-T-2019}  shows that
\begin{equation}\label{4241}
C\left(\pi \times \pi^{\prime}, t\right) \ll C\left(\pi \times \pi^{\prime}\right)(1+|t|)^{m^{\prime} m}, \quad C\left(\pi \times \pi^{\prime}\right) \le e^{O\left(m^{\prime} m\right)} C(\pi)^{m^{\prime}} C\left(\pi^{\prime}\right)^{m} .
\end{equation}
Let $r_{\pi \times \pi^{\prime}} \in\{0,1\}$ be the order of the pole of $L\left(s, \pi \times \pi^{\prime}\right)$ at $s=1$. We have  $r_{\pi \times \pi^{\prime}}=1$ if and only if $\pi^{\prime}=\tilde{\pi}$. The function $\Lambda\left(s, \pi \times \pi^{\prime}\right)=(s(s-1))^{r_{\pi \times \pi^{\prime}}} \mathfrak{q}_{\pi \times \pi^{\prime}}^{s / 2} L\left(s, \pi \right.$  $\left.\times \pi^{\prime}\right) L_{\infty}\left(s, \pi \times\pi^{\prime}\right)$ is entire of order one. There exists a complex number $W\left(\pi \times \pi^{\prime}\right)$ of modulus one such that $$\Lambda\left(s, \pi \times \pi^{\prime}\right)=W\left(\pi \times \pi^{\prime}\right) \Lambda\left(1-s, \tilde{\pi} \times \tilde{\pi}^{\prime}\right).$$ 
Furthermore, $L(s, \pi \times \tilde{\pi})$ extends to the complex plane with a simple pole at $s=1$, hence
\begin{equation}\label{rs}
\sum_{n\le x} \lambda_{\pi \times \widetilde{\pi}}(n) \sim x \mathrm{Res}_{s=1} L(s, \pi \times \widetilde{\pi}) \ll x,
\end{equation}
as a consequence of a standard Tauberian argument. Taking the logarithmic derivative for $L(s, \pi\times\pi^{\prime})$, we obtain that, for $\mathrm{Re}(s)>1$,
$$
-\frac{L^{\prime}}{L}(s, \pi\times\pi^{\prime})=\sum_{p} \sum_{k=1}^{\infty} \frac{a_{\pi\times\pi^{\prime}} (p^k) \log p}{p^{k s}}=\sum_{n=1}^{\infty} \frac{a_{\pi\times\pi^{\prime}}(n) \Lambda(n)}{n^s},$$
where
$$
a_{\pi\times\pi^{\prime}}\left(p^k\right)=\sum_{j=1}^m \sum_{j^{\prime}=1}^{m^{\prime}} \alpha_{j, j^{\prime}, \pi \times \pi^{\prime}}(p)^k.
$$
Building upon Shahidi's non-vanishing result for $L(s, \pi \times \tilde{\pi})$ at $\mathrm{Re}(s)=1$ (see \cite{S-1981}), then
\begin{equation}\label{(1)}
\sum_{n \le x}\Lambda(n)\left|a_{\pi}(n)\right|^{2}\ll\sum_{n \le x} \Lambda(n) a_{\pi \times \widetilde{\pi}}(n) \sim x.
\end{equation}

\subsection{Twists}
\hspace*{\fill}\\

Let $\chi$ be a primitive Dirichlet character with conductor $q$. Fix $\pi=\otimes_{p} \pi_{p} \in \mathfrak{E}_{m}$ with $m\ge 2$. By \cite{C-2008}, the twisted $L$-function $L(s,\pi\otimes\chi)$ equals
$$
\sum_{n=1}^{\infty} \frac{\lambda_{\pi \otimes \chi}(n) }{n^s}
=\prod_p \prod_{j=1}^m \left(1-\frac{\alpha_{j, \pi \otimes \chi }(p)}{p^s}\right)^{-1}.
$$
Furthermore, by \eqref{df}, if $p \nmid q$, then
$$
\begin{aligned}
\left\{\alpha_{j, \pi\otimes\chi }(p): 1 \le j \le m\right\}
=\left\{ \chi(p)\alpha_{j, \pi }(p): 1 \le j \le m\right\} .
\end{aligned}
$$
Hence, for $\mathrm{Re}(s)>1$, we  have
$$
\begin{aligned}
\sum_{n=1}^{\infty} \frac{\chi(n)\lambda_{\pi }(n) }{n^s} &=\prod_p  \prod_{j=1}^m \left(1-\frac{\chi(p)\alpha_{j, \pi }(p)}{p^s}\right)^{-1}=L(s, \pi\otimes\chi ) \prod_{p \mid q} \prod_{j=1}^m\left(1-\frac{\alpha_{j, \pi \otimes \chi}(p)}{p^s}\right).
\end{aligned}
$$
 
 We  use the following zero-free region  along with standard contour integration techniques to prove Proposition \ref{712}.
 \begin{lemma}\label{th6.1}Let $\pi\in\mathfrak{E}_{m}^{b}$ and $Q\ge 3$. There exists a constant $c_\pi>0$, depending effectively on $\pi$, such that for all primitive Dirichlet characters $\chi (\bmod\thinspace q)$ with $q \le Q$, with at most one exception, the $L$-function $L(s, \pi \otimes \chi)$ is non-zero in the region
$$
\mathrm{Re}(s) \ge 1-\frac{c_\pi}{\log (Q(3+|\mathrm{Im}(s)|))} .
$$
If the exceptional character $\chi_1$ exists, then
$\chi_1$ is quadratic.
$L\left(s, \pi \otimes\chi_1\right)$ has exactly one zero $\beta_1$ in this region, and $\beta_1$ is both real and simple.
For all $\varepsilon_{0}>0$, there exists an ineffective constant $c_\pi(\varepsilon_{0})>0$ depending on $\pi$ and $\varepsilon_0$ such that $\beta_1 \le 1-$ $c_\pi(\varepsilon_{0}) Q^{-\varepsilon_{0}}$.
\end{lemma}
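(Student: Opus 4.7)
The plan is to adapt the classical De la Vall\'ee Poussin argument for Dirichlet $L$-functions to the Rankin--Selberg setting on $\mathrm{GL}_m$, and then to invoke a Siegel-type ineffective argument for the exceptional zero. The crucial input is the hypothesis $\pi \in \mathfrak{E}_m^b$: since $\pi = \widetilde{\pi}$ and $\pi \not\cong \pi \otimes \chi$ for any nontrivial primitive quadratic $\chi$, the auxiliary Rankin--Selberg $L$-functions constructed below have a controlled pole structure at $s = 1$, which is what drives the whole argument.

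First I would introduce the isobaric sum $\Pi := \mathbf{1} \boxplus (\pi \otimes \chi) \boxplus (\pi \otimes \overline{\chi})$ and study $L(s, \Pi \times \widetilde{\Pi})$. Its logarithm has non-negative Dirichlet coefficients at prime powers (they are sums of moduli squared of Satake parameters), and it factors into a product involving $\zeta(s)$, $L(s, \pi \otimes \chi)$, $L(s, \pi \otimes \overline{\chi})$, $L(s, \chi)$, $L(s, \chi^2)$, $L(s, \pi \times \pi)$, and $L(s, \pi \times \pi \otimes \chi^{\pm 1})$. The condition $\pi \in \mathfrak{E}_m^b$ guarantees that $L(s, \pi \times \pi)$ has only a simple pole at $s=1$ (from self-duality of a cuspidal $\pi$) and that $L(s, \pi \times \pi \otimes \chi)$ is entire for any nontrivial primitive quadratic $\chi$. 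The standard positivity input $3 + 4\cos\theta + \cos 2\theta \ge 0$, together with the pole count at $s = 1$ and the standard convex bound for $L$-functions derived from the functional equation and the analytic conductor estimate \eqref{4241}, then forces: if $\rho = \beta + i\gamma$ is a zero of $L(s, \pi \otimes \chi)$ with $\beta$ too close to $1$, one obtains a contradiction, except for at most one real simple exceptional zero.

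Next I would argue that the exceptional character must be quadratic and that the exceptional zero is unique across all $\chi\pmod{q}$ with $q \le Q$. If $\chi$ is primitive but not quadratic, then $\chi \ne \overline{\chi}$; a hypothetical real zero $\beta$ of $L(s, \pi \otimes \chi)$ near $s=1$ yields by complex conjugation a corresponding zero of $L(s, \pi \otimes \overline{\chi})$, effectively two exceptional zeros, and a second application of the positivity trick forbids this. Similarly, if two exceptional characters $\chi_1 \pmod{q_1}$ and $\chi_2 \pmod{q_2}$ both existed, the isobaric construction $\mathbf{1} \boxplus (\pi \otimes \chi_1) \boxplus (\pi \otimes \chi_2)$ combined with the same positivity argument (and the fact that $L(s, \pi \times \pi \otimes \chi_1 \chi_2)$ is entire under $\pi \in \mathfrak{E}_m^b$) would give a contradiction. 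This simultaneously yields uniqueness, reality, and simplicity of $\beta_1$.

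The main obstacle is the Siegel-type bound $\beta_1 \le 1 - c_\pi(\varepsilon_0) Q^{-\varepsilon_0}$, which is inherently ineffective. The plan is to use Siegel's dichotomy: fix $\varepsilon_0 > 0$ and distinguish two cases depending on whether there exists some primitive quadratic character $\chi_0$ (with conductor controlled by a quantity depending on $\varepsilon_0$) such that $L(s, \pi \otimes \chi_0)$ has a real zero $\beta_0 \ge 1 - \varepsilon_0/2$. If no such $\chi_0$ exists, the effective De la Vall\'ee Poussin bound from the preceding step already gives what is needed. If such a $\chi_0$ exists, one uses it as a comparison character through the factorization of $L(s, (\pi \otimes \chi_0) \times (\pi \otimes \chi_1))$, whose non-negative coefficients and Dirichlet series coefficient identity allow one to derive a lower bound for $1 - \beta_1$ in terms of the fixed but unknown $\chi_0$; the constant $c_\pi(\varepsilon_0)$ inherits the ineffectivity from the fact that one cannot algorithmically decide which branch of the dichotomy holds. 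Getting the interaction between $\chi_0$ and $\chi_1$ clean, and in particular controlling the analytic conductor of the Rankin--Selberg $L$-function of twists via \eqref{4241}, is the delicate technical point.
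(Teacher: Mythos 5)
The paper does not prove this lemma in-house: its entire proof is a one-line citation to Theorem~4.1 of \cite{J-L-W-2021}. Your sketch is therefore an attempt to reconstruct the content of that reference, and the architecture is the expected one (de la Vall\'ee Poussin via Rankin--Selberg positivity, then a Siegel dichotomy for the ineffective bound), with the $\mathfrak{E}_m^b$ hypotheses playing exactly the role you describe: $\pi=\widetilde{\pi}$ keeps the pole of $L(s,\pi\times\pi)$ at $s=1$ simple, and $\pi\not\cong\pi\otimes\chi$ for nontrivial primitive quadratic $\chi$ makes $L(s,(\pi\times\pi)\otimes\chi)$ entire. However, several concrete steps are wrong or missing as written. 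First, with $\Pi=\mathbf{1}\boxplus(\pi\otimes\chi)\boxplus(\pi\otimes\overline{\chi})$ the factorization is
$$
L(s,\Pi\times\widetilde{\Pi})=\zeta(s)\,L(s,\pi\otimes\chi)^2\,L(s,\pi\otimes\overline{\chi})^2\,L(s,\pi\times\pi)^2\,L\bigl(s,(\pi\times\pi)\otimes\chi^{2}\bigr)\,L\bigl(s,(\pi\times\pi)\otimes\overline{\chi}^{2}\bigr);
$$
the factors $L(s,\chi)$ and $L(s,\chi^2)$ you list do not occur (they would require $\chi$ itself to be a component of $\Pi$), and the Rankin--Selberg twists are by $\chi^{\pm 2}$, not $\chi^{\pm 1}$ --- this distinction matters, since for quadratic $\chi$ one has $\chi^2$ trivial, which is precisely what creates the exceptional-zero loophole. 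Second, your $\Pi$ has no $|\cdot|^{i\gamma}$ twist, so the positivity argument as set up only addresses real zeros; to push zeros $\rho=\beta+i\gamma$ with $\gamma\neq 0$ away from $\mathrm{Re}(s)=1$ you must incorporate the unitary twist into the isobaric sum. Third, the claim that the exceptional character must be quadratic ``by complex conjugation'' is not by itself an argument: you need to first rule out complex exceptional zeros (which goes back to the previous point), and then run a positivity argument on an isobaric sum involving both $\pi\otimes\chi$ and $\pi\otimes\overline{\chi}$ to show a non-self-conjugate $\chi$ cannot carry one. These are genuine gaps in the sketch, even though the overall route is correct and presumably mirrors the cited theorem.
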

\begin{proof}
See \cite[Theorem 4.1]{J-L-W-2021}.
\end{proof}

\section{Proof of Propositions \ref{712}--\ref{713}}
\subsection{Reduction}
\hspace*{\fill}\\

Define the sum
$$
\sum_{n \le x}\Lambda(n)g(n) e(n^{k} \alpha),
$$
where $ g(n)$ is an arithmetic function. According to the Dirichlet approximation theorem, for any $\alpha \in[0,1)$, there exist integers $a$ and $q$ such that  $\alpha$ can be expressed as \eqref{za}, where $Q$ is a parameter to be chosen later. Combining (\ref{za}) with partial summation, the above sum is bounded by
\begin{equation}\label{za7}
\left(1+\frac{x^{k}}{q Q}\right) \max _{1 \le t \le x}\left|\sum_{n \le t}\Lambda(n) g(n) e\left(\frac{an^{k}}{q}\right)\right| .
\end{equation}
Due to the orthogonality of characters, we find that
\begin{equation}\label{5.4}
\begin{aligned}
\sum_{n \le t}\Lambda(n) g(n) e\left(\frac{a n^{k}}{q}\right)
&=\sum_{d \mid q} \sum_{\substack{h=1 \\
(h, q / d)=1}}^{q} e\left(\frac{a d^{k} h^{k}}{q}\right) \sum_{\substack{l \le t / d  \\
l \equiv h(\bmod q / d)}}\Lambda(d l) g(d l) \\
&=\sum_{d \mid q} \sum_{h=1 }^{q/d}  e\left(\frac{a d^{k} h^{k}}{q}\right) \sum_{l \le t / d}\Lambda(d l) g(d l)\frac{1}{\varphi(q / d)} \sum_{\substack{\chi(\bmod q / d)\\
(h, q / d)=1}} \bar{\chi}(h) \chi(l)\\&
\ll \sum_{d \mid q} \frac{1}{\varphi(q / d)} \max _{\chi\left(\bmod q / d\right)}\bigg|\sum_{l \le t / d} \Lambda(d l)g(d l) \chi(l)\bigg| \\
& \quad \times \sum_{\chi\left(\bmod q / d\right)}\bigg|\sum_{\substack{h=1 \\
(h, q / d)=1}}^{q / d} \bar{\chi}(h) e\left(\frac{ad^k h^k }{q}\right)\bigg|,
\end{aligned}
\end{equation}
where $\varphi$ is the Euler function. By the Cauchy--Schwarz inequality and Parseval's identity, the innermost sum over $\chi$ is bounded by
\begin{align}\label{5.3}
& \varphi\left(q / d\right)^{1/2}\left(\sum_{\chi\left(\bmod q / d\right)}\bigg|\sum_{\substack{h=1 \\
(h, q / d)=1}}^{q / d} \bar{\chi}(h) e\left(\frac{ad^k h^k }{q}\right)\bigg|^2\right)^{1/2} \nonumber\\
= & \varphi\left(q / d\right)^{1/2}\left(\sum_{\substack{h_1=1 \\
\left(h_1, q / d\right)=1}}^{q / d} \sum_{\substack{h_2=1 \\
\left(h_2, q / d\right)=1}}^{q / d} e\left(\frac{ad^k\left(h_1^k-h_2^k\right) }{q}\right) \sum_{\substack{\chi\left(\bmod q / d\right)}} \bar{\chi}\left(h_1\right) \chi\left(h_2\right)\right)^{1/2} \nonumber\\
= & \varphi\left(q / d\right)\left(\sum_{\substack{h_1=1 \\
\left(h_1, q / d\right)=1}}^{q / d} \sum_{\substack{h_2=1 \\
h_2 \equiv h_1\left(\bmod q / d\right)}}^{q / d}e\left(\frac{ad^k\left(h_1^k-h_2^k\right)}{q}\right)\right) ^{1/2}=\varphi\left(q / d\right)^{3/2}.
\end{align}
Inserting the estimates \eqref{5.4} and \eqref{5.3} into \eqref{za7}, then
\begin{equation}\label{zong}
\sum_{n \le x}\Lambda(n) g(n) e\left(n^{k} \alpha\right) \ll \left(1+\frac{x^{k}}{q Q}\right) \max _{1 \le t \le x}\left\{ q^{1 / 2} \sum_{d \mid q} \frac{1}{d^{1 / 2}} \max _{\chi (\bmod q / d)}\bigg|\sum_{l \le t / d}\Lambda(d l) g(d l) \chi(l)\bigg|\right\}.
\end{equation}

\subsection{Proof of Proposition \ref{712}}
\hspace*{\fill}\\

It follows from \eqref{716} and \eqref{715} that 
\begin{equation}\label{label0}
\sum_{p\le x}\lambda_{\pi}(p)e(p^{k}\alpha)\log p=\sum_{n \le x}\Lambda(n)a_{\pi}(n) e(n^{k} \alpha)-\sum_{p^{k_1}\le x,
 k_1\ge 2 } a_{\pi}(p^{k_1})e(p^{k_1}\alpha)\log p.
\end{equation}
Using the Cauchy--Schwarz inequality, (\ref{rs}) and (\ref{(1)}), the left side of the above equation is equal to
\begin{equation}\label{label1}
\sum_{n \le x}\Lambda(n)a_{\pi}(n) e(n^{k} \alpha)+O(x^{3/4}).
\end{equation}
Let $g(n)=a_{\pi}(n)$ in \eqref{zong}. Then the above summation is 
\begin{equation}\label{label1-1}
\ll \left(1+\frac{x^{k}}{q Q}\right) \max _{1 \le t \le x}\left\{ q^{1 / 2} \sum_{d \mid q} \frac{1}{d^{1 / 2}} \max _{\chi (\bmod q / d)}\bigg|\sum_{l \le t / d}\Lambda(d l) a_{\pi}(d l) \chi(l)\bigg|\right\}.
\end{equation}
Let $l=l_{1} l_{2} $ with $l_{1} \mid d^{\infty}$ and $\left(l_{2}, l_{1}d\right)=1.$ By the multiplicative property of $a_{\pi}(dl)$, the formula in the brace is
\begin{align}\label{zong-1}
 \ll &q^{1 / 2} \sum_{d \mid q,\ d\neq 1} \frac{1}{d^{1 / 2}} \max _{\chi(\bmod q / d)}\bigg|\sum_{l_{1} \mid d^{\infty},\ l_{1} \le t/d}\Lambda(dl_1) a_{\pi}(dl_{1})\chi(l_{1})\bigg|\nonumber\\
&+q^{1 / 2}\max _{\chi(\bmod q)}\bigg|\sum_{l_{2} \le t} \Lambda(l_{2}) a_{\pi}\left(l_{2}\right) \chi\left(l_{2}\right)\bigg|.
\end{align}
Due to the property of $\Lambda(n)$ and the upper bound of $|a_{\pi}(n)|$, the first item of \eqref{zong-1} is
\begin{align}\label{zong-2}
 \ll q^{1 / 2}t^{1/2} \sum_{p^{k} \mid q,\ k\ge 1} \frac{1}{p^{k / 2}}\bigg|\sum_{p^{k_1}\le t/p^{k},\ k_1\ge 1}\log p\bigg| \ll (qt)^{1/2}(\log q) \log t. 
\end{align}
If $\chi^{\prime}(\bmod\thinspace q^{\prime})$ is the primitive Dirichlet character that induces $\chi(\bmod\thinspace q)$, then $q^{\prime}|q$. For the second item of \eqref{zong-1}, by \eqref{saa}, we get
\begin{equation}\label{718}
\left|\sum_{l_{2} \le t }\Lambda(l_{2}) a_{\pi}\left(l_{2}\right) \chi\left(l_{2}\right)-\sum_{l_{2} \le t } \Lambda(l_{2}) a_{\pi \otimes \chi^{\prime}}(l_{2})\right| \le \sum_{p \mid q} \sum_{p^k \le t, \ k \ge 1}\left|a_\pi\left(p^k\right)\right| \log p \ll_{\varepsilon}t^{\theta_m+\varepsilon} .
\end{equation}
Without loss of generality, assume that $\chi^{\prime}$ is the exceptional character $\chi_{1}$ in Lemma \ref{th6.1}, and let $\beta^{\prime}$ be the corresponding exceptional zero. It follows from \eqref{4241} and \eqref{4242} that the modulus of the conductor of $\pi \otimes \chi^{\prime}$ is smaller than $\mathfrak{q}_{\pi}q^{\prime m}$. We apply \cite[(5.52)]{I-K-2004} to the twisted 
$L$-function $L(s,\pi\otimes\chi^{\prime})$ and conclude that
$$
\sum_{l_{2} \le t } \Lambda(l_{2}) a_{\pi \otimes \chi^{\prime}}(l_{2})=-\frac{t^{\beta^{\prime}}}{\beta^{\prime}}+O\bigg(\mathfrak{q}_{\pi}^{\frac{1}{2}}q^{\prime\frac{m}{2}}t \exp \left(-\frac{c_\pi}{2} \sqrt{\log t}\right)\bigg).
$$
 Since Lemma \ref{th6.1} gives the bound $\beta^{\prime} \le 1-c_\pi(\varepsilon_0) q^{-\varepsilon_0}$, where  $\varepsilon_0>0$ is a fixed constant chosen later,  for all $q^{\prime}\le q\le (\log t)^{A}$, we have
 \begin{align*}
\sum_{l_{2} \le t } \Lambda(l_{2}) a_{\pi \otimes \chi^{\prime}}(l_{2})
\ll_{\pi}  t\exp \left(-c_\pi(\varepsilon_0)(\log t) q^{-\varepsilon_0}\right)+
tq^{\frac{m}{2}} \exp \left(-\frac{c_\pi}{4} \sqrt{\log t}\right).
 \end{align*}
 Choose $\varepsilon_0=1/(2A)$. Then inserting \eqref{label1-1}--\eqref{718} and the above inequality into \eqref{label1}, we get 
  $$ \sum_{p\le x}\lambda_{\pi}(p)e(p^{k}\alpha)\log p \ll_{\pi,A}
\bigg(1+\frac{x^{k}}{qQ}\bigg)q^{\frac{m+1}{2}}x \exp \left(-c \sqrt{\log x}\right),
  $$
  where $c=\min \left\{\frac{c_\pi}{8},\frac{c_{\pi}(1/(2A))}{2}\right\}$.
\subsection{Proof of Proposition \ref{713}}
\hspace*{\fill}\\

Under Hypothesis  S, Lemma \ref{<1>} gives  that
 $$ \sum_{l_{2} \le t } \Lambda(l_{2}) a_{\pi \otimes \chi^{\prime}}(l_{2})
\ll  mt^{\delta+\varepsilon} (\log t)^{2} \log (\mathfrak{q}_{\pi}q^{m}).$$ 
Inserting \eqref{label1-1}--\eqref{718} and the above inequality into \eqref{label1}, we get
   $$
  \sum_{p\le x}\lambda_{\pi}(p)e(p^{k}\alpha)\log p\ll
m^{2}\bigg(1+\frac{x^{k}}{qQ}\bigg)q^{\frac{1}{2}}x^{\delta+\varepsilon}(\log x\mathfrak{q}_{\pi})^{3}+x^{3/4}.
  $$

\section{Proof of Theorems \ref{<2>}--\ref{<3>}}
\subsection{Preliminaries and lemmas}
\hspace*{\fill}\\

The initial step of the Hardy--Littlewood circle method is to divide the unit interval into the major arcs and the minor arcs. The major arcs $\mathfrak{M}$ and minor arcs $\mathfrak{m}$ are defined as follows:
\begin{equation}\label{dearcs}
\ \ \ \ \ \ \ \ \mathfrak{M}=\bigcup_{q\le P}\bigcup_{1\le a\le q \atop (a, q)=1}\mathfrak{M}(q, a), \ \ \ \mathfrak{M}(q, a)=\Big[\frac{a}{q}-\frac{1}{qQ},\frac{a}{q}+\frac{1}{qQ}\Big], \ \ \ \ \ \mathfrak{m}=\big[1/Q,1+1/Q\big]\setminus \mathfrak{M},
\end{equation}
where $1<2P<Q$, $PQ=N$, $P$, $Q$ are  parameters to be chosen later.
Let
\begin{align}\label{TH-def}
T_{k,1}(N, \alpha)=\sum_{p_{1} \le N^{1/k}} \lambda_{\pi}(p_{1}) e(p_{1}^{k} \alpha), \quad
H_{k, u^{\prime}}(N, \alpha)=\sum_{p_{u^{\prime}} \le N^{1/k}}  e(p_{u^{\prime}}^{k} \alpha) \quad (2 \le u^{\prime}\le u).
\end{align}
Using  Parseval's identity, we write
\begin{align}\label{maj-min}
\sum_{(p_1,p_2,\dots,p_u)\in\mathcal{J}_{k,u}(N)}\lambda_{\pi}(p_{1}) &=\int_0^1 T_{k,1}(N, \alpha) \prod_{u^{\prime}=2}^{u} H_{k,u^{\prime}}(N, \alpha)e(-N \alpha) \mathrm{d} \alpha\nonumber\\
&=\int_{\mathfrak{M}} T_{k,1}(N, \alpha) \prod_{u^{\prime}=2}^{u} H_{k,u^{\prime}}(N, \alpha)e(-N \alpha) \mathrm{d} \alpha \nonumber\\
&\quad+\int_{\mathfrak{m}} T_{k,1}(N, \alpha) \prod_{u^{\prime}=2}^{u} H_{k,u^{\prime}}(N, \alpha)e(-N \alpha) \mathrm{d} \alpha .
\end{align}

To control the integral on the minor arcs, we need certain estimates concerning exponential sums over primes.
\begin{lemma}\label{Vino-Harman1} 
Suppose that $\alpha$ is a real number and there exist integers $a$ and $q$ such that $\alpha$ 
satisfies
\begin{equation}\label{Fa-dis}
\alpha=\frac{a}{q}+\lambda, \quad 1 \le a \le q, \quad(a, q)=1, \quad|\lambda| \le \frac{1}{q^2}.
\end{equation}
\begin{enumerate}
  \item For $k=1$,  
  $$\sum_{m\le x}\Lambda(m)e(m^{k}\alpha)\ll x (\log x)^3\left(\frac{1}{q}+\frac{1}{x^{2 / 5}}+\frac{q}{x}\right)^{1 / 2}.
$$
  \item For all  $k \ge 2$ and $\varepsilon_1>0$, 
$$
\sum_{m\le x}\Lambda(m)e(m^{k}\alpha) \ll x^{1+\varepsilon_1}\left(\frac{1}{q}+\frac{1}{x^{1 / 2}}+\frac{q}{x^k}\right)^{1 / K^2},
$$
where $K=2^{k-1}$.
\end{enumerate}
\end{lemma}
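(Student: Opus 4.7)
Both bounds are classical: (1) is Vinogradov's estimate for exponential sums over primes, while (2) is the combination of Vaughan's identity with Weyl's inequality for a degree-$k$ phase. The common plan is to apply Vaughan's identity to the von Mangoldt function $\Lambda(m)$, reducing each sum to ``type I'' (divisor) and ``type II'' (bilinear) pieces, and then to estimate each piece using the Diophantine approximation $\alpha=a/q+\lambda$ with $|\lambda|\le 1/q^{2}$.

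For the linear case, I would apply Vaughan's identity with parameters $U=V=x^{2/5}$. The type I contributions take the shape $\sum_{n\le U}\xi(n)\sum_{m\le x/n} e(nm\alpha)$ with $|\xi(n)|\ll \log x$. Bounding the inner geometric sum by $\min(x/n,1/\|n\alpha\|)$ and applying the classical min-sum estimate
\[
\sum_{n\le N}\min\!\left(\tfrac{x}{n},\tfrac{1}{\|n\alpha\|}\right)\ll\bigl(x/q+N+q\bigr)(\log qx)^{2},
\]
which comes from Farey dissection, then handles these. The type II pieces $\sum_{U<n}\sum_{V<m}\xi_{1}(n)\xi_{2}(m)e(nm\alpha)$ are controlled by Cauchy--Schwarz in the $n$-variable, expanding the square and again reducing to the same min-sum estimate. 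Optimising over the split $U,V$ produces the stated bound $x(\log x)^{3}(1/q+x^{-2/5}+q/x)^{1/2}$.

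For the higher-degree case, I would again use Vaughan's identity; the resulting inner sums become $\sum_{m\le y}e((nm)^{k}\alpha)$ or bilinear variants. To each such Weyl sum apply Weyl differencing $k-1$ times: every step uses Cauchy--Schwarz together with a shift $m\mapsto m+h_{j}$ to lower the degree of the polynomial phase in $m$ by one, at the cost of squaring the length and introducing a new auxiliary variable $h_{j}$. After $k-1$ rounds one is left with a linear exponential sum whose frequency is a constant multiple of $h_{1}\cdots h_{k-1}n^{k}\alpha$, which is handled again by the bound on $\sum_{h}\min(x,1/\|h\alpha\|)$. Raising the resulting estimate to the $1/K^{2}$-th power — with $K=2^{k-1}$, reflecting the $k-1$ Cauchy--Schwarz squarings inside the Weyl step together with the outer Cauchy--Schwarz from the type II decomposition — yields the stated exponent.

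The principal difficulty is technical rather than conceptual: one must track carefully how the approximation $\alpha=a/q+\lambda$ transforms under the repeated differencing, and control the count of multi-indices $(h_{1},\dots,h_{k-1})$ for which the shifted fractional part $\|k!\,h_{1}\cdots h_{k-1}\alpha\|$ is small. Since both estimates are entirely standard and can be found, for instance, in Vaughan's monograph \emph{The Hardy--Littlewood Method} as well as in Ren's work on the Waring--Goldbach problem, the paper will most likely quote them as black boxes rather than reprove them.
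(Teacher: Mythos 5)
Your sketch is correct and matches the standard approach, and you rightly anticipated the paper's treatment: the authors simply cite Vaughan's original paper for the linear case $k=1$ and Harman's Theorem~1 (Trigonometric sums over primes, 1981) for $k\ge 2$, without reproving either estimate. Your attribution of the $k\ge 2$ case to ``Ren's work'' is slightly off (Ren's estimate is the content of the next lemma in the paper, Lemma~\ref{Ren}), but the underlying method---Vaughan's identity to split into type~I/II sums, followed by the standard min-sum estimate in the linear case and Weyl differencing in the higher-degree case---is exactly what the cited sources do.
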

\begin{proof}
For $k=1$, one can see \cite{Vaughan-1977}. For $k\ge 2$, one can see \cite[Theorem 1]{H-G-1981}.
\end{proof}
Sometimes, for $k\ge 2$, it is better to have the factor $x^{\varepsilon_1}$ above replaced by some power of $\log x$. This can be done by a simple modification.
\begin{lemma}\label {Harman2}
Let  $k \ge 2$.  Suppose that $\alpha$ is a real number and there exist integers $a$ and $q$ such that $\alpha$ 
satisfies
\eqref{Fa-dis}. Then
$$
\sum_{m\le x}\Lambda(m)e(m^{k}\alpha) \ll x (\log x)^{c^{\prime}}\left(\frac{1}{q}+\frac{1}{x^{1 / 2}}+\frac{q}{x^k}\right)^{1 / (2 K^2)},
$$
where $c^{\prime}$ is a positive constant that depends only on $k$, and $K$ is defined in Lemma \ref{Vino-Harman1}.
\end{lemma}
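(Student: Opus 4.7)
The plan is to adapt the proof of Lemma \ref{Vino-Harman1}(2) as given in \cite{H-G-1981}, replacing the pointwise divisor-function estimate $d_{r}(n)\ll_{\varepsilon_{1}} n^{\varepsilon_{1}}$ (the sole source of the factor $x^{\varepsilon_{1}}$) by its mean-square analogue $\sum_{n\le x}d_{r}(n)^{2}\ll x(\log x)^{r^{2}-1}$. To do so one inserts one extra application of the Cauchy--Schwarz inequality, and it is precisely this additional Cauchy--Schwarz that halves the exponent on the arithmetic factor from $1/K^{2}$ to $1/(2K^{2})$.

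First I would apply a combinatorial identity for $\Lambda$ (Vaughan's identity, or the Heath--Brown identity with parameters $U,V\asymp x^{1/K}$) to decompose
$$
\sum_{m\le x}\Lambda(m)e(m^{k}\alpha)
$$
into $O(\log^{c_{0}}x)$ Type I sums of the shape $\sum_{m\le M}a_{m}\sum_{n\le x/m}e((mn)^{k}\alpha)$ and Type II sums of the shape $\sum_{M\le m\le 2M}\sum_{N\le n\le 2N}a_{m}b_{n}\,e((mn)^{k}\alpha)$, with coefficients $a_{m},b_{n}\ll(\log x)^{c_{0}}$ and the Type II ranges confined to dyadic subintervals of $[x^{1/K},\,x^{1-1/K}]$.

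For the Type I sums I would bound the inner exponential sum by Weyl's inequality for polynomial phases of degree $k$; performing the Weyl differencings through Cauchy--Schwarz rather than the sharper pointwise device used in \cite{H-G-1981} gives
$$
\sum_{n\le x/m}e\bigl((mn)^{k}\alpha\bigr)\ll \tfrac{x}{m}(\log x)^{c_{1}}\Bigl(\tfrac{1}{q}+\tfrac{1}{x^{1/2}}+\tfrac{q}{x^{k}}\Bigr)^{1/(2K)},
$$
and the summation over $m$ contributes only a further $\log$. For the Type II sums, a first Cauchy--Schwarz in the $m$-variable removes the coefficients and reduces matters to estimating
$$
\sum_{N\le n_{1},n_{2}\le 2N}\ \sum_{M\le m\le 2M}e\bigl(((mn_{1})^{k}-(mn_{2})^{k})\alpha\bigr),
$$
which I would attack by $k-1$ successive Weyl differencings. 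At the last step, where the original argument invokes $d_{r}(n)\ll_{\varepsilon_{1}}n^{\varepsilon_{1}}$ to close the Diophantine count, I would instead apply one more Cauchy--Schwarz and use the $L^{2}$ divisor bound above. This introduces a factor $(\log x)^{c_{2}}$ in place of $x^{\varepsilon_{1}}$, at the price of squaring inside the rational-approximation factor.

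Putting the Type I and Type II contributions together and optimizing over the dyadic ranges $M,N$ (using the constraint \eqref{Fa-dis}) yields the claimed bound $x(\log x)^{c'}\bigl(q^{-1}+x^{-1/2}+qx^{-k}\bigr)^{1/(2K^{2})}$, with $c'=c'(k)$ the accumulated log exponent. The main obstacle I expect is the bookkeeping in the Type II analysis: one must verify that the divisor-type counts are handled uniformly in $q$, so that the arithmetic factor loses exactly a square root and no more, and that every auxiliary manipulation (reordering, Cauchy--Schwarz, partial summation) contributes only a power of $\log x$ rather than a genuine $x^{\varepsilon_{1}}$. Once this accounting is carried out carefully, the remainder of the proof is essentially identical to the template of \cite{H-G-1981}.
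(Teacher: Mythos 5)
The paper gives no proof for this lemma, only the preceding remark that replacing $x^{\varepsilon_{1}}$ by a power of $\log x$ ``can be done by a simple modification'' of the proof of Lemma \ref{Vino-Harman1}(2); your sketch is exactly that modification, and the key mechanism you identify---one additional Cauchy--Schwarz trading the pointwise bound $d_{r}(h)\ll_{\varepsilon}h^{\varepsilon}$ for the mean-square estimate $\sum_{h\le H}d_{r}(h)^{2}\ll H(\log H)^{O(1)}$, which simultaneously produces the $(\log x)^{c'}$ and the halved exponent $1/(2K^{2})$---is the standard and correct one. The only point to watch in the bookkeeping you defer is that the divisor loss must enter just once along each Type I / Type II branch: a second occurrence on the same branch would force a second Cauchy--Schwarz and yield $1/(4K^{2})$ rather than the claimed $1/(2K^{2})$.
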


\begin{lemma}\label {Ren}
 Let $k\ge1$, $\gamma_k=1/2+\log k/\log 2$. Suppose that $\alpha$ is a real number and there exist integers $a$ and $q$ such that $\alpha$ 
satisfies
\eqref{Fa-dis}. Then
$$
\sum_{m\le x}\Lambda(m)e(m^{k}\alpha)\ll (d(q))^{\gamma_k}\left\{x^{1 / 2} \sqrt{q\left(1+|\lambda| x^k\right)}+x^{4 / 5}+\frac{x}{\sqrt{q\left(1+|\lambda| x^k\right)}}\right\} (\log x)^{c^{\prime\prime}},
$$
where $d(n)$ is the divisor function  and $c^{\prime\prime}$ is an absolute positive constant.
\end{lemma}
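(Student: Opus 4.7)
The plan is to establish Lemma~\ref{Ren} by combining Vaughan's identity with iterated Weyl (van~der~Corput) differencing, paying careful attention to how divisor-function factors accumulate so as to produce the sharp exponent $\gamma_k=\tfrac12+\log k/\log 2$ rather than a crude $d(q)^{k/2}$.

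First, I would apply Vaughan's identity with suitable parameters $U,V$ (eventually optimized as powers of $x$) to decompose
\[
\sum_{m\le x}\Lambda(m)e(m^{k}\alpha)
\]
into a constant number of Type~I sums of the shape $S_I=\sum_{d\le U}a_d\sum_{n\le x/d}e((dn)^{k}\alpha)$ and Type~II sums of the shape $S_{II}=\sum_{U<m\le x/V}\sum_{V<n\le x/m}a_m b_n e((mn)^{k}\alpha)$, with $|a_d|\le\log x$ and $|b_n|\ll d(n)$. For $k=1$ this is essentially Vaughan's theorem and already yields the claimed shape with $\gamma_1=1/2$; for $k\ge 2$ the challenge lies in the polynomial phase.

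For the Type~I contribution, the inner sum is a pure Weyl sum of degree $k$ in $n$ with leading coefficient $\alpha d^{k}$; I would replace $\alpha d^k$ by its Dirichlet approximation induced from $\alpha=a/q+\lambda$, apply Weyl's inequality to the inner sum, and sum over $d$. The resulting bound naturally involves the quantity $q(1+|\lambda|x^{k})$ because this is the effective denominator of $\alpha$ on the scale $x^{k}$, and the ranges where the minor/major term dominates give rise to the three terms $x^{1/2}\sqrt{q(1+|\lambda|x^k)}$, $x^{4/5}$, and $x/\sqrt{q(1+|\lambda|x^k)}$. For the Type~II contribution, I would apply Cauchy--Schwarz in the $n$-variable, separate the diagonal, exchange summation to reach an $m$-sum of the form $\sum_{m}e\bigl(\alpha m^{k}(n_1^{k}-n_2^{k})\bigr)$, and then apply Weyl/van~der~Corput differencing iteratively to reduce the polynomial degree. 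The crucial optimization is to perform the differencing \emph{dyadically}: each step halves the degree rather than reducing it by one, so that the number of differencing stages is $\lceil\log_2 k\rceil$ instead of $k-1$. At each stage one picks up a factor of $d(q)^{1/2}$ from counting the solutions of the relevant congruence modulo~$q$, yielding a total divisor exponent of $\tfrac12+\log_2 k=\gamma_k$.

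The bookkeeping for the divisor factor is the main technical obstacle: one must invoke $d(mn)\le d(m)d(n)$ at each halving step and choose the differencing parameter to be roughly the square root of the current summation length, so that the accumulated $d(q)$-exponent grows logarithmically in $k$ rather than linearly. Once the two types are bounded, optimizing $U$ and $V$ (near $x^{2/5}$, which is the source of the $x^{4/5}$ term) and assembling the contributions produces the claimed inequality, with $c''$ absorbing the finitely many powers of $\log x$ that arise from the Vaughan decomposition and from the nested Cauchy--Schwarz applications.
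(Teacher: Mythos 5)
The paper does not give its own proof of this lemma: the ``proof'' is a single citation to \cite[Theorem~1.1]{R-2005}, i.e.\ Ren's exponential sum estimate. So your sketch is really an attempt to reconstruct Ren's theorem from scratch. The general class of tools you reach for is the right one — a combinatorial decomposition of $\Lambda$ into Type~I and Type~II sums (Ren uses Heath-Brown's identity rather than Vaughan's, but that is a surface difference), followed by Cauchy--Schwarz and congruence-counting modulo divisors of $q$ to track the $d(q)$-power.

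The critical gap is the step you describe as performing the differencing ``dyadically'' so that ``each step halves the degree rather than reducing it by one.'' No such differencing operation exists: Weyl differencing (equivalently, the van~der~Corput $A$-process) applied to a polynomial phase of degree $k$ always produces a phase of degree $k-1$, never $k/2$, regardless of how the differencing parameter is chosen. Since you attribute the exponent $\gamma_k = 1/2 + \log k/\log 2$ precisely to this degree-halving, the mechanism you propose for keeping the accumulated $d(q)$-power logarithmic in $k$ is not a real one; a naive iteration of genuine Weyl differencing accumulates a divisor power linear in $k$, and the decisive idea that reduces it to $\log_2 k$ is exactly what your sketch is missing. You also leave unexplained how the quantity $q(1+|\lambda|x^{k})$ emerges as the effective denominator once a coefficient of the form $\alpha(n_1^{k}-n_2^{k})$ appears inside a Type~II sum after Cauchy--Schwarz: one must re-approximate that coefficient and show that the ensuing congruence counts depend only on divisors of $q$, which is where the real work in Ren's argument lies. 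As written, the proposal is a reasonable opening move but does not contain the argument that makes the theorem true; the paper is right to simply cite Ren's Theorem~1.1.
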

\begin{proof}
See \cite[Theorem 1.1]{R-2005}.
\end{proof}

\subsection{Proof of Theorem \ref{<2>}}
\subsubsection{Major arcs}

As for $r_{\pi}(N)$, we use the orthogonality to derive that
\begin{equation}\label{h1u}
\begin{aligned}
\int_0^1\left|H_{1, u^{\prime}}(N, \alpha)\right|^2 \mathrm{d}\alpha \le \sum_{p_{u^{\prime}_{1}}\le N}\sum_{p_{u^{\prime}_{2}}\le N} \int_0^1 e((p_{u^{\prime}_{1}}-p_{u^{\prime}_{2}})\alpha)\mathrm{d}\alpha \le \sum_{p\le N}1\ll N/\log N.
\end{aligned}
\end{equation}
Let  $P\le (\log N)^{100B}$, where $P$ be chosen later. By applying the Cauchy--Schwarz inequality, the above inequality, partial summation and Proposition \ref{712},
\begin{align}\label{dfdf}
\int_{\mathfrak{M}} & T_{1,1}(N, \alpha)  H_{1, 2}(N, \alpha)H_{1, 3}(N, \alpha) e(-N \alpha)\mathrm{d} \alpha \nonumber\\
\quad & \ll\left(\max _{\alpha \in\mathfrak{M}}\left|T_{1,1}(N, \alpha)\right|\right)\left\{\int_0^1\left|H_{1,2}(N, \alpha)\right|^2 \mathrm{d} \alpha\right\}^{1 / 2} \left\{\int_0^1\left|H_{1,3}(N, \alpha)\right|^2 \mathrm{d} \alpha\right\}^{1 / 2}\nonumber \\
& \ll_{\pi,B}P^{\frac{m+1}{2}}N^{2}
    \exp \left(- c\sqrt{\log N}\right),
\end{align}
where $c$ is the constant in Proposition \ref{712}.

Regarding $w_{\pi}(N)$, we apply \cite[Theorem 3]{Blomer-06} to give
\begin{align}\label{Four-value}
\int_0^1\left|H_{2,2}(N, \alpha)H_{2,3}(N, \alpha)\right|^2 \mathrm{d} \alpha \le \sum_{p_2^{2}+p_3^{2}=p_2^{\prime2}+p_3^{\prime2}\atop p_2, p_3 ,p_{2}^{\prime },p_3^{\prime}\le N^{1/2} }1\ll N\log N.
\end{align}
Let $P\le (\log N)^{100B}$, where $P$ be chosen later. It follows from the Cauchy--Schwarz inequality, the above inequality, partial summation and Proposition \ref{712} that
\begin{align}\label{dfdfdf}
\int_{\mathfrak{M}} T_{2,1}(N, \alpha)&H_{2,2}(N, \alpha)H_{2,3}(N, \alpha)H_{2,4}(N, \alpha)H_{2,5}(N, \alpha)e(-N \alpha)\mathrm{d} \alpha\nonumber\\
 \quad \ll&\left(\max _{\alpha \in\mathfrak{M}}\left|T_{2,1}(N, \alpha)\right|\right)\left\{\int_0^1\left|H_{2,2}(N, \alpha)H_{2,3}(N, \alpha)\right|^2 \mathrm{d} \alpha\right\}^{1 / 2}\nonumber\\
  &\quad \times\left\{\int_0^1\left|H_{2,4}(N, \alpha)H_{2,5}(N, \alpha)\right|^2 \mathrm{d} \alpha\right\}^{1 / 2}\nonumber\\
\quad \ll&_{\pi,B}P^{\frac{m+1}{2}}N^{\frac{3}{2}} \exp \left(- \frac{c}{2}\sqrt{\log N}\right),
\end{align}
where $c$ is the constant in Proposition \ref{712}.
\subsubsection{Minor arcs}
By the Polya--Vinogradov inequality and \cite[(5.48)]{I-K-2004}, 

\begin{align}\label{maj12}
\int_0^1\left|T_{k,1}(N, \alpha)\right|^2 \mathrm{d} \alpha &\le \sum_{p_{1}\le N^{1/k}}\sum_{p_{2}\le N^{1/k}}\lambda_{\pi}(p_{1}) \bar{\lambda}_{\pi}(p_{2}) \int_0^1 e((p_{1}^{k}-p_{2}^{k})\alpha) \mathrm{d} \alpha\nonumber\\
&\le \sum_{p\le N^{1/k}}|\lambda_{\pi}(p)|^{2 }\ll N^{1/k}m^{2}(\log(N\mathfrak{q}_{\pi}))^{2}.
\end{align}
Following from the Cauchy--Schwarz inequality, \eqref{h1u} and the above inequality, we have
\begin{align}\label{min-1}
\int_{\mathfrak{m}} & T_{1,1}(N, \alpha)  H_{1, 2}(N, \alpha)H_{1, 3}(N, \alpha) e(-N \alpha)\mathrm{d} \alpha\nonumber \\
\quad & \le\left(\max _{\alpha \in\mathfrak{m}}\left|H_{1,2}(N, \alpha)\right|\right)\left\{\int_0^1\left|T_{1,1}(N, \alpha)\right|^2 \mathrm{d} \alpha\right\}^{1 / 2}\left\{\int_0^1\left|H_{1,3}(N, \alpha)\right|^2 \mathrm{d} \alpha\right\}^{1 / 2} \nonumber \\
& \ll \left(\max _{\alpha \in\mathfrak{m}}\left|H_{1,2}(N, \alpha)\right|\right)Nm\log(N\mathfrak{q}_{\pi}).
\end{align}
Using the first inequality in Lemma \ref{Vino-Harman1} and partial summation, we get
\begin{equation*}
\max _{\alpha \in\mathfrak{m}}\left|H_{1,2}(N, \alpha)\right|\ll (Q^{1/2}N^{1/2}+P^{-1/2}N+N^{4/5})(\log N)^{3}.
\end{equation*}
Inserting this bound into \eqref{min-1}, then
\begin{equation}\label{dfdf1}
\begin{aligned}
\int_{\mathfrak{m}} & T_{1,1}(N, \alpha) H_{1,2}(N, \alpha) H_{1,3}(N, \alpha) e(-N \alpha) \mathrm{d} \alpha \ll N^{2}m\log(N\mathfrak{q}_{\pi})(P^{-1/2}+N^{-1/5})(\log N)^{3}.
\end{aligned}
\end{equation}

For $w_{\pi}(N)$, we obtain that
\begin{align}\label{douve}
\int_0^1\left|T_{2,1}(N, \alpha)H_{2,3}(N, \alpha)\right|^2 \mathrm{d} \alpha& \ll \sum_{p_1^{2}+p_3^{2}=p_1^{\prime2}+p_3^{\prime2}\atop p_1, p_3, p_{1}^{\prime }, p_3^{\prime }\le N^{1/2}}|\lambda_{\pi}(p_1)\lambda_{\pi}(p_1^{\prime})|\nonumber\\
&\ll\sum_{p_1\le N^{1/2}}|\lambda_{\pi}(p_{1})|^{2}
\sum_{p_1^{\prime}\le N^{1/2}}
\sum_{p_1^{2}-p_1^{\prime2}=(p_3^{\prime}+p_3)(p_3^{\prime}-p_3)\atop p_3, p_3^{\prime }\le N^{1/2}}1\nonumber\\
&\ll Nm^{2}\log^{2}(N\mathfrak{q}_{\pi})\log N. 
\end{align}
Treat $w_{\pi}(N)$ in the same way as the estimate for  $r_{\pi}(N)$, to obtain
\begin{align}\label{25mino}
&\int_{\mathfrak{m}} T_{2,1}(N, \alpha)H_{2,2}(N, \alpha)H_{2,3}(N, \alpha)H_{2,4}(N, \alpha)H_{2,5}(N, \alpha)e(-N \alpha)\mathrm{d} \alpha\nonumber\\
 & \le\left(\max _{\alpha \in\mathfrak{m}}\left|H_{2,2}(N, \alpha)\right|\right)\left\{\int_0^1\left|T_{2,1}(N, \alpha)H_{2,3}(N, \alpha)\right|^2 \mathrm{d} \alpha\right\}^{1 / 2}\nonumber\\&\quad\times\left\{\int_0^1\left|H_{2,4}(N, \alpha)H_{2,5}(N, \alpha)\right|^2 \mathrm{d} \alpha\right\}^{1 / 2} \nonumber\\
& \ll \left(\max _{\alpha \in\mathfrak{m}}\left|H_{2,2}(N, \alpha)\right|\right)Nm\log(N\mathfrak{q}_{\pi})\log N.
\end{align}
In accordance with Lemma \ref {Harman2} and by partial summation,  we give the following upper bound 
$$
\max _{\alpha \in\mathfrak{m}}\left|H_{2,2}(N, \alpha)\right|\ll N^{1/2}(\log N)^{c^{\prime}}\left(\frac{1}{P}+\frac{1}{N^{1/4}}+\frac{Q}{N}\right)^{1/8}.
$$
Inserting the above estimate into \eqref{25mino}, 
\begin{align}\label{dfdf2}
&\int_{\mathfrak{m}} T_{2,1}(N, \alpha)H_{2,2}(N, \alpha)H_{2,3}(N, \alpha)H_{2,4}(N, \alpha)H_{2,5}(N, \alpha)e(-N \alpha)\mathrm{d} \alpha \nonumber\\
&\ll N^{3/2}m\log(N\mathfrak{q}_{\pi})(\log N)^{c^{\prime}+1}\bigg(P^{-1 / 8} +N^{-1/32}\bigg).
\end{align}
\subsubsection{Choosing parameters}
For $r_{\pi}(N)$,  we choose  $P=(\log N)^{2B+8}$. Then by \eqref{dfdf}, \eqref{dfdf1} and \eqref{maj-min}, we  have
$$
r_\pi(N)=\sum_{(p_1,p_2,p_3)\in\mathcal{J}_{1,3}(N)} \lambda_{\pi}\left(p_1\right) \ll_{\pi, B} N^{2}(\log N)^{-B}.
$$

For $w_{\pi}(N)$,   we choose  $P=(\log N)^{8B+8c^{\prime}+16}$. It follows from \eqref{dfdfdf}, \eqref{dfdf2} and \eqref{maj-min} that
$$
w_\pi(N)=\sum_{(p_1,p_2,p_3,p_4,p_5)\in\mathcal{J}_{2,5}(N)} \lambda_{\pi}\left(p_1\right) \ll_{\pi, B} N^{3/2}(\log N)^{-B}.$$
 \subsection{Proof of Theorem \ref{<3>}}\label{pr-th-1.4}   
 \hspace*{\fill}\\
 
We use Proposition \ref{713}  to improve the upper bounds in \eqref{dfdf} and \eqref{dfdfdf} respectively.  Then 
\begin{equation}\label{dfdf3}
\begin{aligned}
\int_{\mathfrak{M}} & T_{1,1}(N, \alpha)  H_{1, 2}(N, \alpha)H_{1, 3}(N, \alpha) e(-N \alpha)\mathrm{d} \alpha \ll  Pm^2 N^{1+\delta+\varepsilon} (\log ( N\mathfrak{q}_{\pi}))^3+N^{7/4}/\log N
\end{aligned}
\end{equation}
and 
\begin{align}\label{dfdf4}
\int_{\mathfrak{M}}& T_{2,1}(N, \alpha)H_{2,2}(N, \alpha)H_{2,3}(N, \alpha)H_{2,4}(N, \alpha)H_{2,5}(N, \alpha)e(-N \alpha)\mathrm{d} \alpha\nonumber\\
&\ll Pm^2 N^{1+\delta/2+\varepsilon} (\log ( N\mathfrak{q}_{\pi}))^3\log  N+N^{11/8}\log N.
\end{align}

For $r_\pi(N)$ under Hypothesis S, we choose $P=N^{\frac{2}{3}(1-\delta)}$ in \eqref{dfdf1} and \eqref{dfdf3}  to get
$$
r_\pi(N) \ll m^2 N^{5/3+\delta/3+\varepsilon}(\log ( N\mathfrak{q}_{\pi}))^3.
$$

For $w_\pi(N)$ under Hypothesis S, put $P=N^{\frac{1}{3}(1-\delta)}, \ Q=N/P$ and $P^{\prime}=N^{1 / 3}, \ Q^{\prime}=N/P^{\prime}$. On the minor arcs, one notes that each $\alpha \in \mathfrak{m}$ can be written as \eqref{dearcs} with $P<q \le Q$. By Dirichlet's rational approximation, each $\alpha \in \mathfrak{m}$ can also be written as \eqref{za} with $1 \le q \le Q^{\prime}$ and $|\lambda| \le 1 / q Q^{\prime}$. Let $\mathfrak{n}$ be the subset of $\alpha \in \mathfrak{m}$ such that $P^{\prime}<q \le Q^{\prime}$ and $|\lambda| \le 1 / qQ^{\prime}$. We now apply the second inequality of Lemma \ref{Vino-Harman1} and partial summation to get
\begin{equation}\label{min-1.1}
\max _{\alpha \in\mathfrak{n}}\left|H_{2,2}(N, \alpha)\right| \ll N^{1/2+\varepsilon}\left(\frac{1}{P^{\prime}}+\frac{1}{N^{1 / 4}}+\frac{Q^{\prime}}{N}\right)^{1 / 4} \ll N^{7/16+\varepsilon} .
\end{equation}
For $\alpha \in \mathfrak{m} \backslash \mathfrak{n}$, we have
$$
P<q \le P^{\prime}, \quad|\lambda| \le \frac{1}{q Q^{\prime}}
$$
or
$$
q \le P, \quad \frac{1}{q Q}<|\lambda| \le \frac{1}{q Q^{\prime}}.
$$
In either case, there holds
$$
N^{\frac{1}{6}(1-\delta)}\ll \sqrt{\min \left(P, \frac{N}{Q}\right)} \ll \sqrt{q\left(1+|\lambda| N\right)} \ll \sqrt{P^{\prime}+\frac{N}{Q^{\prime}}} \ll N^{1 /6}.
$$
Then  Lemma \ref{Ren} gives
\begin{align}\label{min-1.2}
\max _{\alpha \in \mathfrak{m} \backslash \mathfrak{n}}\left|H_{2,2}(N, \alpha)\right| & \ll\left\{N^{1 / 4} \sqrt{q\left(1+|\lambda| N\right)}+N^{2 / 5}+\frac{N^{1/2}}{\sqrt{q\left(1+|\lambda|N\right)}}\right\} d(q)^{2} (\log N)^{c^{\prime\prime}} \nonumber\\
& \ll N^{5 / 12+\varepsilon} +N^{\frac{1}{3}+\frac{\delta}{6}+\varepsilon} .
\end{align}
Combing \eqref{min-1.1} and \eqref{min-1.2}, we derive that
\begin{align}\label{min-1.3}
 \max _{\alpha \in \mathfrak{m}}\left|H_{2,2}(N, \alpha)\right| \ll N^{\frac{7}{16}+\varepsilon} +N^{\frac{1}{3}+\frac{\delta}{6}+\varepsilon}.
\end{align}
It follows from  \eqref{min-1.3}, \eqref{25mino}, \eqref{dfdf4} and \eqref{maj-min} that
$$
w_{\pi}(N)\ll m^{2}(N^{\frac{23}{16}+\varepsilon}+N^{\frac{4}{3}+\frac{\delta}{6}+\varepsilon})(\log ( N\mathfrak{q}_{\pi}))^3.
$$ 
This finishes the proof of Theorem \ref{<3>}.
\begin{rem}\label{com}
 For $w_\pi(N)$ under Hypothesis S, if we refrain from subdividing the minor arcs further and instead individually apply Lemma \ref{Vino-Harman1} or Lemma \ref{Ren}, we obtain the following two results.
 \begin{enumerate}
   \item  Using the second inequality of Lemma \ref{Vino-Harman1} and choosing $P=N^{\frac25(1-\delta)}$ , we have 
\begin{equation*}
\max _{\alpha \in\mathfrak{m}}\left|H_{2,2}(N, \alpha)\right| \ll N^{1/2+\varepsilon}\left(\frac{1}{P}+\frac{1}{N^{1 / 4}}\right)^{1 / 4} \ll N^{2/5+\delta/10+\varepsilon}.
\end{equation*}
Inserting this into \eqref{25mino} and combing \eqref{dfdf4}, we obtain
$$
w_{\pi}(N)\ll m^{2}N^{\frac{7}{5}+\frac{\delta}{10}+\varepsilon}(\log ( N\mathfrak{q}_{\pi}))^3.
$$ 
   \item Similarly, using  Lemma \ref{Ren} and choosing $P=N^{1/2-\delta/3}$, we have 
   \begin{equation*}
\max _{\alpha \in\mathfrak{m}}\left|H_{2,2}(N, \alpha)\right| \ll N^{1/2+\varepsilon}\left(N^{1/4}P^{-1/2}+N^{-1 / 10}\right) \ll N^{1/2+\delta/6+\varepsilon} .
\end{equation*}
Then 
$$
w_{\pi}(N)\ll m^{2}N^{3/2+\delta/6+\varepsilon}(\log ( N\mathfrak{q}_{\pi}))^3.
$$ 
 \end{enumerate}By directly comparing the aforementioned two results, it becomes evident that
 $N^{\frac{7}{5}+\frac{\delta}{10}+\varepsilon}$ and $N^{3/2+\delta/6+\varepsilon}$ are larger than $(N^{\frac{23}{16}+\varepsilon}+N^{\frac{4}{3}+\frac{\delta}{6}+\varepsilon})$. Hence, the rationale behind further subdividing the minor arcs is stronger.
 \end{rem}

\section{Preliminaries and proof of Theorem \ref{z11<2>}}
\subsection{The modular $L$-functions}
\hspace*{\fill}\\

Fix a non-CM form $f\in H_{k_{1}}^*(N_1)$. Let $$
f(z)=\sum_{n=1}^{\infty}  \lambda_f(n) n^{(k_{1}-1) / 2} e(n z)
$$
be its normalized Fourier expansion at the cusp $\infty$. For each prime $p$,  let $\theta_{p} \in[0, \pi]$ be the unique angle such that $\lambda_{f}(p)=2 \cos \theta_{p}$. The modular $L$-function $L(s, f)$ associated with $f$ has the Euler product representation
$$
L(s, f)=\sum_{n=1}^{\infty} \frac{\lambda_{f}(n)}{n^{s}}=\prod_{p}\left(1-\frac{\lambda_{f}(p)}{p^{s}}+\frac{\chi_{0}(p)}{p^{2 s}}\right)^{-1}, \quad \mathrm{Re}(s)>1,
$$
where $\chi_{0}$ is the trivial Dirichlet character modulo $N_1$. Let's rewrite the Euler product as
$$
L(s, f)=\prod_{p \mid q}\left(1-\frac{\left(-\lambda_{p} p^{-\frac{1}{2}}\right)}{p^{s}}\right) \prod_{p \nmid q} \prod_{j=0}^{1}\left(1-\frac{e^{i(2 j-1) \theta_{p}}}{p^{s}}\right)^{-1}, \quad \mathrm{Re}(s) >1,
$$
where $\lambda_{p} \in\{-1,1\}$ is the eigenvalue of the Atkin-Lehner operator $|_{k_1} W\left(Q_{p}\right)$.
According to Deligne \cite{D-1974}, for any prime  $p$, there are two  numbers $\alpha_{f}(p)$ and $\beta_{f}(p)$ such that
\begin{equation}\label{223}
\alpha_{f}(p)=e^{i \theta_{p}},\quad \beta_{f}(p)=e^{-i \theta_{p}}
\end{equation}
and
$$
\lambda_{f}(p)=\alpha_{f}(p)+\beta_{f}(p)
$$
with $\theta_{p} \in[0, \pi]$.
Deligne proved the Ramanujan-Petersson conjecture, which states that
$$
\left|\lambda_{f}(n)\right| \le d(n).
$$

For each $j \ge 1$ and $\mathrm{Re}(s)>1$, the $j$th symmetric power $L$-function attached to $f$ is  defined by
\begin{equation}\label{714}
L\left(s, \mathrm{sym}^{j} f\right)=\sum_{n=1}^{\infty} \frac{\lambda_{\mathrm{sym}^{j} f}(n)}{n^{s}}=\prod_{p \mid q} \prod_{m=0}^{j}\left(1-\frac{\alpha_{ \mathrm{sym}^{j}f,m }(p)}{p^{s}}\right)^{-1} \prod_{p \nmid q} \prod_{m=0}^{j}\left(1-\frac{e^{i(2 m-j) \theta_{p}}}{p^{s}}\right)^{-1}.
\end{equation}
The values $\alpha_{ \mathrm{sym}^{j}f,m }(p)$ can be determined by using \cite[Appendix]{S-T-2019} and  \cite[Appendix]{D-G-P-T-2020}. Note that $L\left(s,\mathrm{sym}^{1} f\right)=L(s, f)$ and $L\left(s,\mathrm{sym}^{0} f\right)=\zeta(s)$. 
It is well known that $\lambda_{\mathrm{sym}^{j} f}(n)$ is a real multiplicative function. Via \eqref{714}, one easily verify  that
\begin{equation}\label{lamda-uj}
\lambda_{\mathrm{sym}^{j} f}(p)=\frac{\sin(n+1)\theta_p}{\sin \theta_p}=U_{j}\left(\cos \theta_{p}\right), \quad p \nmid N_1.
\end{equation}
Let $\Gamma_{\mathbb{R}}(s):=\pi^{-s/2}\Gamma(s/2)$ and $\Gamma_{\mathbb{C}}(s)=\Gamma_{\mathbb{R}}(s) \Gamma_{\mathbb{R}}(s+1)$. With these definitions, we obtain that
\begin{equation}\label{<66>}
L\left(s,\left(\mathrm{sym}^j f\right)_{\infty}\right)= \begin{cases}q_{\mathrm{sym}^j f} \prod_{m=1}^{(j+1) / 2} \Gamma_{\mathbb{C}}\left(s+\left(m-\frac{1}{2}\right)(k-1)\right), & \text { if } j \text { is odd}, \\ q_{\mathrm{sym}^j f} \Gamma_{\mathbb{R}}(s+r) \prod_{m=1}^{j / 2} \Gamma_{\mathbb{C}}(s+m(k-1)), & \text { if } j \text { is even},\end{cases}
\end{equation}
where  $q_{\mathrm{sym}^{j} f}$ is  a suitable integer,  and  $r=0$ if $j \equiv 0\ (\bmod\ 4)$, and $r=1$ if $j \equiv 2\ (\bmod\ 4)$. Let $C(\mathrm{sym}^{j}f)$ be   the analytic conductor of $L(s, \mathrm{sym}^{j}f)$.  For any  primitive Dirichlet character $\chi$, we give a  proof  that  the twist $L$-function $L(s, \mathrm{sym}^{j}f\otimes \chi)$ has no Landau--Siegel zeros. 
\begin{lemma}\label{effec}
Let  $Q$ be a large positive number and $\chi $ be a primitive Dirichlet character modulo $q$ with $q\le Q$. Define
\[
y_{f} = 
\begin{cases}
0 & \text{if } f \text{ has squarefree level or } f \text{ corresponds}\\
   &\text{with a non-CM elliptic curve over } \mathbb{Q}, \\
2 & \text{otherwise}.
\end{cases}
\]
There exists a constant $c_{9}>0$ such that if $n \ge 1$, then $L\left(s, \operatorname{sym}^n f \otimes \chi \right) \neq 0$ for
$$
\mathrm{Re}(s) \ge  1-\frac{c_{9}}{n^{2+y_{f}} \log (nk_1N_1Q (3+|\operatorname{Im}(s)|))}.
$$
\end{lemma}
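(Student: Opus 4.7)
The plan is to combine the Newton--Thorne automorphy of symmetric powers with the zero-free region of Lemma \ref{th6.1}, and then exclude the potentially exceptional character by a Hoffstein--Ramakrishnan-type positivity argument on a Rankin--Selberg $L$-function.

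First, I would verify that $\mathrm{sym}^{n}f \in \mathfrak{E}_{n+1}^{b}$. Cuspidality and automorphy of $\mathrm{sym}^{n}f$ on $\mathrm{GL}_{n+1}$ follow from Newton--Thorne \cite{N-T-2019, N-T-2020}. Self-duality follows because $f$ has trivial nebentypus, so the Satake parameters of $f$ satisfy $\alpha_f(p)\overline{\beta_f(p)}=1$ (see \eqref{223}), which forces $\mathrm{sym}^n f \cong \widetilde{\mathrm{sym}^n f}$. The non-invariance $\mathrm{sym}^n f \neq \mathrm{sym}^n f \otimes \eta$ for any non-trivial primitive quadratic Dirichlet character $\eta$ follows from the non-CM hypothesis on $f$: comparing Satake parameters at primes with $\eta(p)=-1$ would force $\lambda_f(p)=0$ on a density-$1/2$ set of primes, which characterizes CM forms.

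Next, apply Lemma \ref{th6.1} with $\pi=\mathrm{sym}^{n}f$ to obtain a standard zero-free region for all twists $L(s,\mathrm{sym}^{n}f\otimes\chi)$ with $q\le Q$, with at most one exceptional primitive quadratic character $\chi_{1}$, whose exceptional zero $\beta_{1}$ is real and simple. To rule out this exception, I would form the auxiliary $L$-function
$$
\mathcal{L}(s):=\zeta(s)\,L(s,\chi_{1})\,L(s,\mathrm{sym}^{n}f\otimes\chi_{1})^{2}\,L(s,\mathrm{sym}^{n}f\times\mathrm{sym}^{n}f\otimes\chi_{1}^{2}),
$$
which has non-negative Dirichlet coefficients (since $\chi_1^2$ is trivial and the Rankin--Selberg local factors produce non-negative symmetric combinations). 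Factoring via \eqref{jijiji1}, one sees that $\mathcal{L}(s)$ has at most a double pole at $s=1$, and the positivity of coefficients, combined with standard effective upper bounds for $L(1,\mathrm{Ad}\,\mathrm{sym}^{n}f)$ from Hoffstein--Lockhart type estimates, forces $1-\beta_{1}\gg 1/(n^{2+y_{f}}\log(nk_{1}N_{1}Q))$, contradicting any hypothetical Landau--Siegel zero closer to $1$.

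The main obstacle is tracking the explicit dependence on $n$ to obtain the exponent $2+y_{f}$ in the denominator: this requires sharp control of the analytic conductor $C(\mathrm{sym}^{n}f\times\mathrm{sym}^{n}f)$ via \eqref{4241}, bounds on its archimedean factor \eqref{<66>}, and careful handling of the ramified primes dividing $N_{1}$ (which is where the case split defining $y_{f}$ enters, distinguishing squarefree or elliptic-curve levels from the general case). The remaining parameters $k_{1}, N_{1}, Q$ enter only through the conductor and the $|\mathrm{Im}(s)|$-dependence in \eqref{chi}, which feed into Lemma \ref{th6.1} directly.
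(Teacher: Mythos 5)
The paper's own ``proof'' of this lemma is a single-line citation to \cite[Corollary 1.3]{Thorner-2024}. You are attempting to reprove that result from scratch, which is a genuinely different route — but your sketch has concrete gaps.

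The central problem is the auxiliary Dirichlet series. Writing $a(p^k)=a_{\mathrm{sym}^{n}f}(p^k)$ (real), the coefficient of $\tfrac{1}{kp^{ks}}$ in $\log\mathcal{L}(s)$ for your
$\mathcal{L}(s)=\zeta(s)\,L(s,\chi_1)\,L(s,\mathrm{sym}^n f\otimes\chi_1)^2\,L(s,\mathrm{sym}^n f\times\mathrm{sym}^n f)$
is $1+\chi_1(p)^k+2\chi_1(p)^k a(p^k)+a(p^k)^2$. When $\chi_1(p)^k=-1$ this equals $a(p^k)\big(a(p^k)-2\big)$, which is negative for $0<a(p^k)<2$. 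So $\mathcal{L}(s)$ does \emph{not} have the required non-negativity, and the positivity argument collapses. The correct auxiliary object needs the factorization $(1+\chi_1(p)^k)(1+a(p^k))^2\ge 0$, which corresponds to including the extra factors $L(s,\mathrm{sym}^n f)^2$ and $L(s,\mathrm{sym}^n f\times\mathrm{sym}^n f\otimes\chi_1)$ that you omitted. Related minor slips: from \eqref{223} one has $\alpha_f(p)\overline{\beta_f(p)}=e^{2i\theta_p}$, not $1$ (the correct identities are $\alpha_f(p)\beta_f(p)=1$ and $\overline{\alpha_f(p)}=\beta_f(p)$, which is what actually gives self-duality), and Hoffstein--Lockhart provides \emph{lower} bounds on $L(1,\mathrm{Ad}\,\pi)$, not upper bounds.

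Even after fixing the positivity, the genuinely hard content of Thorner's theorem — the explicit exponent $n^{2+y_f}$, and in particular the dichotomy $y_f\in\{0,2\}$ tied to squarefree level or elliptic-curve level — is left entirely to a sentence about ``careful handling of ramified primes.'' That is exactly where the proof lives; one needs sharp bounds on local conductors of $\mathrm{sym}^n f$ at primes $p\mid N_1$, and these differ between the squarefree/elliptic case and the general case. A Hoffstein--Ramakrishnan positivity argument as you outline it would, at best, deliver a Siegel-zero repulsion with an unspecified (possibly much worse) polynomial in $n$. Given these issues, citing \cite{Thorner-2024} as the paper does is the sound choice; if you want to reprove the lemma you must correct the auxiliary $L$-function, reverse the direction of the $L(1)$-bound, and actually carry out the conductor analysis that determines $y_f$.
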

\begin{proof}
The proof can be found in \cite[Corollary 1.3]{Thorner-2024}.
\end{proof}

\subsection{The  Sato--Tate conjecture}
\hspace*{\fill}\\

Serre's modular version of the Sato--Tate conjecture posits that if $f$ is non-CM, then the sequence $\left\{\theta_{p}\right\}$ is uniformly distributed in the interval $[0, \pi]$ with respect to the measure $\mathrm{d} \mu_{S T}:=(2 / \pi) \sin ^{2} \theta \mathrm{d} \theta$. More explicitly, let $I_{0}$ be a subinterval of $[-2,2]$, and for a positive real number $x$, define the set
$$
N_{I_{0}}(f, x):=\#\left\{p \le x: \mathrm{gcd}(p, N_1)=1, \lambda_f(p) \in I_{0}\right\}.
$$
The Sato--Tate conjecture states that for a fixed non-CM form $f \in H_{k_{1}}^*(N_1)$, we have
$$
\lim _{x \rightarrow \infty} \frac{N_{I_{0}}(f, x)}{\pi(x)}=\int_{I_{0}} \mu_{\infty}(t) \mathrm{d} t,
$$
where 
$$
\mu_{\infty}(t):= \begin{cases}\frac{1}{\pi} \sqrt{1-\frac{t^2}{4}}, & \text { if } t \in[-2,2], \\ 0, & \text { otherwise. }\end{cases}
$$
If $\chi_I$ is the indicator function of the interval $I=[\alpha, \beta]\subseteq[0,\pi]$, then 
$$
\sum_{\substack{p  \le x \\
\lambda_{f}(p) \in [2\cos\beta, 2\cos\alpha]}}1=\sum_{p \le x} \chi_I\left(\theta_p\right) .
$$
 Thorner \cite{Thorner-2014} approximate $\chi_I$ with a differentiable function using the following construction formula in \cite{V-2004}.
\begin{lemma}\label{71le1}
Let $R$ be a positive integer, and let $a, b, \delta^{\prime} \in \mathbb{R}$ satisfy
$$
0<\delta^{\prime}<1 / 2, \quad \delta^{\prime} \le b-a \le 1- \delta^{\prime} .
$$
Then there exists an even periodic function $g(y)$ with period 1 satisfying
\begin{enumerate}
 \item
 $g(y)=1$ when $y \in\left[a+\frac{ \delta^{\prime}}{2} , b-\frac{ \delta^{\prime}}{2}\right]$,
 \item
 $g(y)=0$ when $y \in\left[b+\frac{ \delta^{\prime}}{2}, 1+a-\frac{ \delta^{\prime}}{2}\right]$,
 \item
 $0 \le g(y)\le 1$ when $y$ is in the rest of the interval $\left[a-\frac{ \delta^{\prime}}{2} , 1+a-\frac{ \delta^{\prime}}{2} \right]$, and
 $g(y)$ has the Fourier expansion
$$
g(y)=b-a+\sum_{n=1}^{\infty}\left(a_n \cos (2 \pi n y)+b_n \sin (2 \pi n y)\right),
$$
where for all $n \ge 1$,
$$
\left|a_n\right|,\left|b_n\right| \le \min \left\{2(b-a), \frac{2}{n \pi}, \frac{2}{n \pi}\left(\frac{R}{\pi n  \delta^{\prime}}\right)^R\right\}.
$$
\end{enumerate}
\end{lemma}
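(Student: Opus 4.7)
The plan is to construct $g$ as a convolution of a periodic indicator function with a compactly supported smooth kernel of small width, a standard Beurling--Selberg / Vaaler-type approach. Let $\psi_R$ be a non-negative, even bump function supported in $[-\delta'/2,\delta'/2]$ with $\int_{\mathbb{R}}\psi_R=1$ and satisfying the derivative bound $\|\psi_R^{(j)}\|_\infty \ll (R/\delta')^j$ for $1\le j\le R$. A convenient explicit choice is the $R$-fold self-convolution of an appropriately rescaled indicator $\chi_{[-\delta'/(2R),\,\delta'/(2R)]}$, which is an order-$R$ B-spline and enjoys exactly the above derivative growth.

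First I would set $\tilde g(y):=(\chi_{[a,b]}*\psi_R)(y)$, where $\chi_{[a,b]}$ is extended periodically and the convolution is taken on $\mathbb{T}=\mathbb{R}/\mathbb{Z}$. Because $\psi_R$ has mass one and support in $[-\delta'/2,\delta'/2]$, the function $\tilde g$ equals $1$ on $[a+\delta'/2,\,b-\delta'/2]$, vanishes on $[b+\delta'/2,\,1+a-\delta'/2]$, and lies in $[0,1]$ on the two transition strips, matching conditions (1)--(3). To enforce evenness, I would replace $\tilde g$ by its even symmetrization $g(y):=\tfrac12(\tilde g(y)+\tilde g(-y))$; in the setup where this lemma is applied the target interval is positioned so that the symmetrization does not spoil the equalities with $1$ and $0$ on the prescribed intervals. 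This guarantees the three structural properties.

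Next I would read off the Fourier expansion of $g$. The constant term is $\int_0^1 g = b-a$, so the claimed form of the expansion holds. For $n\ge 1$, the convolution identity gives $\hat g(n)=\widehat{\chi_{[a,b]}}(n)\,\widehat{\psi_R}(n)$, whence $|a_n|,|b_n|\le 2|\hat g(n)|$. The direct computation $|\widehat{\chi_{[a,b]}}(n)|\le \min\bigl(b-a,\,1/(\pi n)\bigr)$ combined with the trivial bound $|\widehat{\psi_R}(n)|\le 1$ yields the first two bounds $2(b-a)$ and $2/(n\pi)$. Applying integration by parts $R$ times, using the support condition and the derivative bound on $\psi_R$, produces
\begin{equation*}
|\widehat{\psi_R}(n)|\;\le\;\Bigl(\frac{R}{\pi n \delta'}\Bigr)^R,
\end{equation*}
and multiplying by $|\widehat{\chi_{[a,b]}}(n)|\le 1/(\pi n)$ gives the third bound $(2/(n\pi))(R/(\pi n\delta'))^R$. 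Taking the minimum of the three completes the Fourier estimate.

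The main obstacle I anticipate is not the qualitative construction but the meticulous tracking of absolute constants so that the factor is exactly $(R/(\pi n \delta'))^R$ with the right $\pi$'s. This hinges on the precise derivative bound for the B-spline kernel and a careful integration-by-parts that exploits the compact support in $[-\delta'/2,\delta'/2]$; a clumsier kernel would produce an extra factor of $2^R$ or similar. Alternatively, one can shortcut this by invoking Vaaler's construction of majorants and minorants of the sawtooth function directly from the Beurling function, which yields trigonometric polynomials with exactly the claimed Fourier-coefficient bounds; this is the route implicit in the citation to \cite{V-2004}.
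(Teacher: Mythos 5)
The paper does not prove this lemma; it cites it verbatim to the reference \cite{V-2004} (Vinogradov's trigonometric-sums monograph), so there is no in-paper argument to compare against. Your reconstruction follows the same standard route: convolve the periodic indicator of $[a,b]$ with a compactly supported kernel that is an $R$-fold self-convolution of a rescaled box, read off the structural properties from mass one and support width $\delta'$, and estimate Fourier coefficients via the convolution theorem. That is exactly Vinogradov's construction, so the approach is the right one.

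Two points need fixing, though. First, the integration-by-parts step is not quite sound as stated. The $R$-fold self-convolution $\psi_R$ of the normalized box on $[-\delta'/(2R),\delta'/(2R)]$ is a B-spline of order $R$; its $(R-1)$-st derivative is piecewise constant with jumps, and its $R$-th derivative is a finite sum of Dirac masses, so a bound $\|\psi_R^{(R)}\|_\infty \ll (R/\delta')^R$ is false and $R$-fold integration by parts against a bounded derivative is unavailable. The clean route is to bypass derivatives entirely: the one-step kernel has Fourier transform $\hat u(n)=\dfrac{\sin(\pi n\delta'/R)}{\pi n\delta'/R}$, so $\hat\psi_R(n)=\hat u(n)^R$ and the elementary bound $\lvert\sin x/x\rvert\le\min\{1,1/\lvert x\rvert\}$ gives $\lvert\hat\psi_R(n)\rvert\le\min\bigl\{1,(R/(\pi n\delta'))^R\bigr\}$ with exactly the right constants. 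Combined with $\lvert\widehat{\chi_{[a,b]}}(n)\rvert\le\min\{b-a,1/(\pi n)\}$ this yields all three stated bounds on $|a_n|,|b_n|$.

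Second, the evenness symmetrization is both hand-wavy and, on closer inspection, unnecessary to the content of the result: the stated Fourier expansion contains $b_n\sin(2\pi ny)$ terms, which is incompatible with $g$ being even, and the downstream use in the paper forms $g(\theta/2\pi)+g(-\theta/2\pi)$, which is even regardless of whether $g$ is. So the word ``even'' in the lemma statement appears to be a slip; Vinogradov's $g$ (and your convolution $\chi_{[a,b]}*\psi_R$) is not even in general, and you should not contort the construction to force it. Drop the symmetrization and just produce the function directly.
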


\subsection{Proof of Theorem \ref{z11<2>}}
\hspace*{\fill}\\

If $\chi_I$ is the indicator function of the interval $I=[\alpha, \beta]\subseteq[0,\pi]$, then 
$$
\# \mathcal{J}_{1,3,f, I}(N)=\sum_{\substack{(p_1,p_2,p_3)\in\mathcal{J}_{1,3}(N) \\
\lambda_{f}(p_1) \in [2\cos\beta, 2\cos\alpha]}} 1=\sum_{(p_1,p_2,p_3)\in\mathcal{J}_{1,3}(N)} \chi_I\left(\theta_{p_1}\right) .
$$

Let $g(\theta_{p_1})$ be defined as in Lemma \ref{71le1} with $a=\frac{\alpha}{2 \pi}-\frac{\delta^{\prime}}{2}$ and $b=\frac{\beta}{2 \pi}+\frac{\delta^{\prime}}{2}$. Thorner \cite{Thorner-2014} constructed
$g^{+}(\theta_{p_1} ; I, \delta^{\prime})=g\Big(\frac{\theta_{p_1}}{2 \pi}\Big)+g\Big(-\frac{\theta_{p_1}}{2 \pi}\Big)$ as a pointwise upper bound for $\chi_I(\theta_{p_1})$. By repeating this construction with $a=\frac{\alpha}{2 \pi}+\frac{\delta^{\prime}}{2}$, and $b=\frac{\beta}{2 \pi}-\frac{\delta^{\prime}}{2}$, we can get a lower bound $g^{-}(\theta_{p_1} ; I, \delta^{\prime})$ for $\chi_I(\theta_{p_1})$. Thus, according to  Lemma \ref{71le1}, we can express $g^{\pm}(\theta_{p_1} ; I, \delta^{\prime})$ in terms of the basis of Chebyshev polynomials of the second kind $\{U_n(\cos (\theta_{p_1})) \}_{n=0}^{\infty}$ as follows:
\begin{equation}\label{z1<2>}
g^{\pm}(\theta_{p_1} ; I, \delta^{\prime})=a_0^{\pm}(I, \delta^{\prime})-a_2^{\pm}(I, \delta^{\prime})+\sum_{n=1}^{\infty}\left(a_n^{\pm}(I, \delta^{\prime})-a_{n+2}^{\pm}(I, \delta^{\prime})\right) U_n(\cos (\theta_{p_1})),
\end{equation}
where $a_n^{\pm}(I, \delta^{\prime})$ represents the $n$-th Fourier coefficient in the cosine expansion of $g^{\pm}(\theta_{p_1} ; I, \delta^{\prime})$.
Furthermore, we derive 
\begin{equation}\label{cos1}
\begin{aligned}
\left|a_0^{\pm}(I,  \delta^{\prime})-a_2^{\pm}(I,  \delta^{\prime})-\mu_{S T}(I)\right| & \ll \delta^{\prime} ,
\end{aligned}
\end{equation}
and for $n\ge 1$,
\begin{equation}\label{cos2}
\left|a_n^{\pm}(I,  \delta^{\prime})\right|,\left|a_{n+2}^{\pm}(I,  \delta^{\prime})\right| \le \min \left\{2(b-a), \frac{2}{n \pi}, \frac{2}{n \pi}\left(\frac{R}{\pi n \delta^{\prime}}\right)^R\right\}.\end{equation}
Following the method in \cite{T-2014}, when summing $g^{\pm}\left(\theta_{p_1} ; I, \delta^{\prime}\right)$ over $\mathcal{J}_{1, 3}(N)$,  we can interchange the order of summation
by choosing $R$ to ensure absolute convergence. Thus by \eqref{z1<2>}, we obtain the following expression:
$$\begin{aligned}
\sum_{(p_1,p_2,p_3)\in\mathcal{J}_{1,3}(N)} g^{\pm}(\theta_{p_1} ; I, \delta^{\prime})
&=\sum_{(p_1,p_2,p_3)\in\mathcal{J}_{1,3}(N)}(a_0^{\pm}(I, \delta^{\prime})-a_2^{\pm}(I, \delta^{\prime}))\nonumber\\
&\quad+\sum_{(p_1,p_2,p_3)\in\mathcal{J}_{1,3}(N)}\bigg(\sum_{n=1}^{\infty}\left(a_n^{\pm}(I, \delta^{\prime})-a_{n+2}^{\pm}(I, \delta^{\prime})\right) U_n(\cos (\theta_{p_1}))\bigg)\nonumber\\
&=(a_0^{\pm}(I, \delta^{\prime})-a_2^{\pm}(I, \delta^{\prime}))\sum_{(p_1,p_2,p_3)\in\mathcal{J}_{1,3}(N)}1\nonumber\\
&\quad+\sum_{n=1}^{\infty}\left(a_n^{\pm}(I, \delta^{\prime})-a_{n+2}^{\pm}(I, \delta^{\prime})\right)\sum_{(p_1,p_2,p_3)\in\mathcal{J}_{1,3}(N)}U_n(\cos (\theta_{p_1})).\end{aligned}
$$
It follows from \eqref{cos1}, \eqref{cos2}, \eqref{lamda-uj} and the above equation that
\begin{equation}\label{jijiji}
\begin{aligned}
\sum_{(p_1,p_2,p_3) \in\mathcal{J}_{1,3}(N)} \chi_I\left(\theta_{p_1}\right)
=(\mu_{\mathrm{ST}}(I)+O(\delta^{\prime}))\bigg(\sum_{(p_1,p_2,p_3)\in\mathcal{J}_{1,3}(N)} 1\bigg)+O(A),
\end{aligned}
\end{equation}
where
\begin{align}\label{A-defi}
A=\sum_{n=\delta^{\prime-1}}^{\infty}\frac{4}{n \pi}\left(\frac{R}{\pi n \delta^{\prime}}\right)^R \sum_{(p_1,p_2,p_3)\in\mathcal{J}_{1,3}(N)}\lambda_{\mathrm{sym}^{n}f}(p_1)+\sum_{n=1}^{\delta^{^\prime-1}} \frac{2}{n\pi}\sum_{(p_1,p_2,p_3)\in\mathcal{J}_{1,3}(N)}\lambda_{\mathrm{sym}^{n}f}(p_1).
\end{align}

Let $\pi_{f} \in \mathfrak{E}_{2}$ correspond to $f$. It is known from \cite{N-T-2019} and \cite{N-T-2020} that $\mathrm{sym}^{j} f$ corresponds to $\mathrm{sym}^{j}\pi_{f} \in \mathfrak{E}_{j+1}$. Utilizing the method outlined in the proof of Theorem \ref{<3>}, \eqref{add} with the zero-free region in Lemma \ref{effec}, and \cite[(6.5)]{T-2021}, if the level of $f$ is squarefree, we derive the inequality
$$
\sum_{(p_1,p_2,p_3)\in\mathcal{J}_{1,3}(N)} \lambda_{\mathrm{sym}^{n}f}(p_1)\ll (n+1) N^{2} \exp\bigg(-\frac{c_{9}\log N}{n^2 \log (nk_1N_1\log N )} \bigg).
$$
Based on Lemma \ref{71le1},  we choose $\delta^{\prime-1}=(\log N)^{1/2} / \log (k_1 N_1 \log N)$, and $R=2$ in \eqref{A-defi}, then
$$
A\ll \frac{N^{2} (\log N)^{1/2}}{\log (k_1 N_1 \log N)}\exp\bigg(-\frac{c_{9}\log^{2} (k_1 N_1 \log N)}{ \log ( k_1N_1 \log^{2} N )} \bigg).
$$
Inserting this upper bound of $A$ into \eqref{jijiji}, we have
$$
\sum_{(p_1,p_2,p_3)\in\mathcal{J}_{1,3}(N)} \chi_I\left(\theta_{p_1}\right)
=\mu_{\mathrm{ST}}(I)\left(\# \mathcal{J}_{1,3}(N)\right)+O\left(\frac{\log (k_1 N_1 \log N)}{ (\log N)^{1/2} }\left(\# \mathcal{J}_{1,3}(N)\right)\right).
$$
Similar to the discussion above, if the level of $f$ is not squarefree, we choose $\delta^{\prime-1}=(\log N)^{1/4} / \log (k_1 N_1 \log N)$ and $R=2$ in \eqref{A-defi}  to obtain 
$$
\sum_{(p_1,p_2,p_3)\in\mathcal{J}_{1,3}(N)} \chi_I\left(\theta_{p_1}\right)
=\mu_{\mathrm{ST}}(I)\left(\# \mathcal{J}_{1,3}(N)\right)+O\left(\frac{\log (k_1 N_1 \log N)}{ (\log N)^{1/4} }\left(\# \mathcal{J}_{1,3}(N)\right)\right).
$$
Under Hypothesis S, and using the method outlined in the proof of Theorem \ref{<3>}, \eqref{add}, and \cite[(6.5)]{T-2021}, we derive the inequality:
$$
\sum_{(p_1,p_2,p_3)\in\mathcal{J}_{1,3}(N)} \lambda_{\mathrm{sym}^{n}f}(p_1)\ll (n+1) N^{5/3+\delta/3}(\log N)^3\log (k_1N_1n).
$$
In \eqref{A-defi}, we choose $\delta^{\prime}=RN^{\delta/6-1/6}\log( k_1N_1N)$ and $R=2$ based on Lemma \ref{71le1},  then
$$
A \ll N^{11/6+\delta/6}(\log N)^{3}.
$$
Substituting this upper bound of $A$ into \eqref{jijiji}, we have
$$
\sum_{(p_1,p_2,p_3)\in\mathcal{J}_{1,3}(N)} \chi_I\left(\theta_{p_1}\right)
=\mu_{\mathrm{ST}}(I)\left(\# \mathcal{J}_{1,3}(N)\right)+O(N^{11/6+\delta/6}(\log N)^{3}).
$$

  \section{Appendix}\label{appendix}
In this section, for a better understanding of the results in Theorem \ref{twist}, we provide  asymptotic formulas for the ternary Goldbach conjecture, as well as  an asymptotic formula for the number of the sums of five square primes. 
\begin{lemma}\label{4}
 For a large integer $N$, we have
 \begin{enumerate}
  \item 
$$
\# \mathcal{J}_{1,3}(N)
=\mathfrak{G}_{1,3}(N)\frac{N^2}{2(\log N)^{3}}+O\left(\frac{N^{2}\log \log N}{(\log N)^{4}}\right),
$$
where $\mathfrak{G}_{1,3}(N)$ is the singular series defined in \eqref{singular series} and satisfies $\mathfrak{G}_{1,3}(N)\ge 1/2$ for odd $N$.
 \item
\begin{equation*}\label{5}
\# \mathcal{J}_{2,5}(N)=
\mathfrak{G}_{2,5}(N)\frac{\pi^2N^{3/2}}{24(\log N)^{5}}+O\left(\frac{N^{3/2}\log \log N}{(\log N)^{6}}\right),
\end{equation*}
where $\mathfrak{G}_{2,5}(N)$  is absolutely convergent and   satisfies $\mathfrak{G}_{2,5}(N)\gg1$ for $N\equiv5\ ({\rm mod}\ 24)$.
\end{enumerate}
\end{lemma}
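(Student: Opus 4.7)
The plan is to carry out the Hardy--Littlewood circle method in its classical form, closely following the templates of Vinogradov (for $k=1$) and Hua (for $k=2$). First I would pass to the logarithmically weighted counting function
\[
R_{k,u}(N) := \sum_{\substack{p_1^k+\cdots+p_u^k=N \\ p_i\le N^{1/k}}} (\log p_1)\cdots(\log p_u) = \int_0^1 S_k(\alpha)^u e(-N\alpha)\,\ud\alpha,
\]
where $S_k(\alpha) := \sum_{p\le N^{1/k}} (\log p)\, e(p^k\alpha)$. Recovering $\#\mathcal{J}_{k,u}(N)$ from $R_{k,u}(N)$ by partial summation produces the $(\log N)^u$ denominator in the main term, while contributing only a $\log\log N$ factor to the error term. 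The unit interval is then partitioned exactly as in \eqref{dearcs}, with $P=(\log N)^C$ for a sufficiently large constant $C$ and $Q=N/P$.

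On each major arc $\mathfrak{M}(q,a)$, I would write $\alpha=a/q+\lambda$ and approximate $S_k(\alpha)$ by $\varphi(q)^{-1}C_k(a,q)\,I_k(\lambda)$, where $C_k(a,q):=\sum_{(h,q)=1}^{q}e(ah^k/q)$ and $I_k(\lambda):=\int_0^{N^{1/k}}e(x^k\lambda)\,\ud x$. The Siegel--Walfisz theorem controls the replacement error by $O(N\exp(-c\sqrt{\log N}))$. Summing the resulting product over $q\le P$ and integrating $|\lambda|\le 1/(qQ)$ produces the product of the singular series $\mathfrak{G}_{k,u}(N)$ with the singular integral, which evaluates (via the Gamma integral $\int_{\mathbb R} I_k(\lambda)^u e(-N\lambda)\,\ud\lambda=\Gamma(1+1/k)^u N^{u/k-1}/\Gamma(u/k)$) to $N/2$ when $(k,u)=(1,3)$ and $(\pi^2/24)N^{3/2}$ when $(k,u)=(2,5)$, using $\Gamma(1/2)^5/\Gamma(5/2)=4\pi^2/3$.

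On the minor arcs, I would combine a Weyl-type $L^\infty$ bound with an $L^{2t}$ moment estimate. For $(k,u)=(1,3)$, Vinogradov's classical bound $\max_{\alpha\in\mathfrak{m}}|S_1(\alpha)|\ll N(\log N)^4 P^{-1/2}$ combined with Parseval yields a minor-arc contribution $\ll N^2(\log N)^{4-C/2}$, which is absorbed into the stated error after choosing $C$ large. For $(k,u)=(2,5)$, I would use the sup bound from Lemma \ref{Harman2} paired with Hua's inequality $\int_0^1|S_2(\alpha)|^4\,\ud\alpha\ll N(\log N)^{c'}$ and one further factor estimated trivially to obtain $\ll N^{3/2}(\log N)^{c'-C/4}$.

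The main obstacle is the arithmetic analysis of the singular series, especially for the quadratic case. For $(1,3)$, one reads directly from \eqref{singular series} that when $N$ is odd the prime $2$ lies in the second product with local factor $1+1/(2-1)^3=2$, and absolute convergence of the remaining Euler product (each factor being $1+O(p^{-3})$) combined with trivial lower bounds at small primes yields $\mathfrak{G}_{1,3}(N)\ge 1/2$. For $(2,5)$, one writes $\mathfrak{G}_{2,5}(N)=\prod_p A_p(N)$ with local densities $A_p(N)$ counting solutions of $x_1^2+\cdots+x_5^2\equiv N\pmod{p^r}$ with $(x_i,p)=1$; the congruences $N\equiv 5\pmod 8$ and $N\equiv 2\pmod 3$ are precisely what is needed to force $A_2(N),A_3(N)>0$, while for $p\ge 5$ standard Gauss sum bounds give $A_p(N)=1+O(p^{-3/2})$, securing absolute convergence and the lower bound $\mathfrak{G}_{2,5}(N)\gg 1$. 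This local calculation at $p=2$ is the most delicate step, but is by now textbook material (cf.\ Vaughan's monograph).
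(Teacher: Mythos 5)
Your proposal is correct and follows essentially the same route as the paper, which itself simply cites Hua's Theorem 11 and sketches the circle method: major arcs treated via the Siegel--Walfisz theorem, minor arcs via the classical Vinogradov/Hua sup bounds and $L^2$/$L^4$ mean-value estimates, and the singular series and singular integral assembled and evaluated in the standard way. The only cosmetic difference is that you work with the $\Lambda$-weighted generating functions $S_k(\alpha)$ and deconvolve the logarithmic weights at the end, whereas the paper manipulates the unweighted sums $H_{k,i}$ directly; minor slips in the exact powers of $\log$ in your minor-arc bounds (e.g.\ $P^{-1/8}$ rather than $P^{-1/4}$ from Lemma~\ref{Harman2} with $k=2$) are harmless since $P=(\log N)^C$ is chosen with $C$ large.
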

\begin{proof}
One can see the detail proof in \cite[Theorem 11]{Hua-65}. Here we take the first equation as an example to illustrate the main ideas. By the  Hardy--Littlewood circle method, we consider\begin{align*}
\# \mathcal{J}_{1,3}(N) &=\left\{\int_{\mathfrak{M}}+\int_{\mathfrak{m}}\right\}H_{1, 1}(N, \alpha) H_{1, 2}(N, \alpha)H_{1, 3}(N, \alpha)e(-N \alpha) \mathrm{d} \alpha,
\end{align*}
where $\mathfrak{M}$ and $\mathfrak{m}$ are defined in \eqref{dearcs}. For the integral on the minor arcs, we follow a similar approach as in \eqref{min-1}, but replace $T_{1, 1}(N, \alpha)$ with $H_{1, 1}(N, \alpha)$. Regarding the major arcs, due to the sparsity of prime numbers, we seek an equation that establishes a connection between prime numbers and integers to derive an asymptotic formula. Consider
\begin{align*}
H_{1, i}(N, \alpha)= &  \sum_{\substack{h=1 \\
(h, q)=1}}^q e\left(\frac{a h}{q}\right) \sum_{\substack{\sqrt{N}<p \le N \\
p\equiv h~(\bmod q)}} e\left(p \lambda\right)+O(\sqrt{N}) \\
= & \mu(q) \int_{\sqrt{N}}^{N}e(u\lambda)\mathrm{d}\sum_{p\le u, ~p\equiv h~({\rm mod}~q)}1+O(\sqrt{N}).
\end{align*}
When $q\le\log^{A}u$, by Siegel--Walfisz theorem, we have
$$
\sum_{p\le u, ~p\equiv h~({\rm mod}~q)}1=\frac{\mathrm{Li}(u)}{\varphi(q)}+O(ue^{-c\sqrt{\log u}}),  \quad\quad \mathrm{Li}(u)=\int_{2}^{u}\frac{\mathrm{d} t}{\log t}.
$$ 
Moreover, under the generalized Riemann hypothesis (GRH in brief), the error term can be improved to
$$
O(u^{1/2}\log^2 u).
$$
By substituting either of the above equations into the major arcs, and  using the  properties of characteristic functions with the integral in \cite[Theorem 11]{Hua-65}, we get the first equation.

The proof shows the error term in the asymptotic formula cannot be improved under GRH, which is different from Theorem \ref{<3>}. One needs to explore other methods for improving the error term.
\end{proof}
\vspace*{1ex}
\noindent\textbf{Data availability statement} Data sharing not applicable to this article as no datasets were generated or analysed during the current study.          

\vspace*{3 ex}
\noindent\textbf{\large Declaration}
\vspace*{3 ex}

\noindent\textbf{Conflict of interest} On behalf of all authors, the corresponding author states that there is no conflict of interest.

\bibliography{ref92}
\bibliographystyle{plain}
\end{document}